\documentclass[11pt]{amsart}

\usepackage[utf8]{inputenc}
\usepackage[T1]{fontenc}
\usepackage[english]{babel}

\usepackage{hyperref}
\usepackage{amsthm}
\usepackage{amsmath}
\usepackage{amssymb}
\usepackage{mathrsfs}
\usepackage{bbold}
\usepackage{graphicx}
\usepackage{listings}
\usepackage{mathrsfs}
\usepackage{enumerate}
\usepackage{pict2e}
\usepackage{stmaryrd}
\usepackage{mathtools}
\usepackage{mathabx}

\usepackage[affil-it]{authblk}

\usepackage[all,cmtip]{xy}

\setlength{\hoffset}{-0.5in}\hoffset-0.5in
\setlength{\textwidth}{15cm}

\newtheorem{theorem}{Theorem}[section]
\newtheorem{corollary}{Corollary}

\newtheorem{lemma}[theorem]{Lemma}
\newtheorem{proposition}{Proposition}

\theoremstyle{definition}

\DeclareMathOperator{\ug}{\boldsymbol{u}}

\DeclareMathOperator{\HDNLS}{H_{ \rm DNLS}}

\DeclareMathOperator{\sinc}{sinc}
\DeclareMathOperator{\domega}{d\omega}
\DeclareMathOperator{\dx}{dx}
\DeclareMathOperator{\ds}{ds}

\DeclareMathOperator{\supp}{supp}

\newcommand\unupoop[3]{\mathrel{\mathop{#1}\limits_{#2}^{#3} }  }

\author{Joackim \textsc{Bernier}}

\email{joackim.bernier@univ-rennes1.fr}

\keywords{Discrete nonlinear Schr\"odinger equation, Growth of high Sobolev norms, Modified energies} 
\subjclass[2010]{35Q55, 37K60, 65P99}

\title[Bounds on the high Sobolev norms of DNLS]{Bounds on the growth of high discrete Sobolev norms for the cubic discrete nonlinear Schr\"odinger equations on $h\mathbb{Z}$.}

\begin{document}

\maketitle

\centerline{\scshape Joackim Bernier}
\medskip
{\footnotesize
 \centerline{IRMAR, CNRS UMR 6625}
 \centerline{Universit\'e de Rennes 1, Campus de Beaulieu}
   \centerline{ 263 avenue du G\'en\'eral Leclerc, CS 74205 }
   \centerline{ 35042 Rennes cedex, France}
}

\begin{abstract}
We consider the discrete nonlinear Schr\"odinger equations on a one dimensional lattice of mesh $h$, with a cubic focusing or defocusing nonlinearity.
We prove a polynomial bound on the growth of the discrete Sobolev norms, uniformly with respect to the stepsize of the grid. This bound is based on a construction of higher modified energies.
\end{abstract}

\section{Introduction}
We consider the cubic discrete nonlinear Schr\"odinger equation (called DNLS)  on a grid $h\mathbb{Z}$ of stepsize $h>0$. This equation is a differential equation on $\mathbb{C}^{h\mathbb{Z}}$ defined by (see \cite{MR2742565} and the references therein for details about its derivation)
\begin{equation}
\label{def_DNLS}
\forall g\in h\mathbb{Z}, \ i\partial_t \ug_g = (\Delta_h \ug)_g+ \nu |\ug_g|^2 \ug_g,
\end{equation}
where $\nu \in \{-1,1\}$ is a parameter and $\Delta_h \ug$ is the discrete second derivative of $\ug$. It is defined by
\[\forall g\in h\mathbb{Z}, \ (\Delta_h \ug)_g = \frac{\ug_{g+h}  - 2 \ug_g + \ug_{g-h}}{h^2}. \]
We consider both the \it focusing \rm and the \it defocusing \rm equations. They correspond respectively to the choices $\nu = 1$ and $\nu =-1$.

\medskip

DNLS is a popular model in numerical analysis for the spatial discretization of the cubic nonlinear Schr\"odinger equation (NLS), given by:
\begin{equation}
\label{def_NLS}
i\partial_t u = \partial_x^2 u + \nu |u|^2u, 
\end{equation}
see, for example, \cite{MR3018143},\cite{MR2576378},\cite{ref_traveling_wave},\cite{MR2277106},\cite{MR3460749},\cite{MR2742565}. 
Motivated by the approximation properties of NLS by DNLS, we consider the discrete model near its continuous limit {\em i.e.} when $h$ goes to $0$.
So, we introduce norms consistent with the usual continuous norms and we pay attention to establish estimates uniform with respect to $h$.

\medskip

We introduce the discrete $L^2$ space. It is defined by
\[ L^2(h\mathbb{Z}) = \left\{  \ug \in \mathbb{C}^{h\mathbb{Z}}, \  \|\ug\|_{L^2(h\mathbb{Z})}^2 = h \sum_{g\in h\mathbb{Z}} |\ug_g|^2< \infty  \right\} .\] 
This space is natural to solve DNLS. Indeed, as $L^2(h\mathbb{Z})$ is a Banach algebra (which is not the case in the continuous setting), Cauchy Lipschitz Theorem can be applied to get the local well posedness of DNLS in $L^2(h\mathbb{Z})$. Furthermore, since \eqref{def_DNLS} is invariant by gauge transform, as a consequence of the Noether Theorem the discrete $L^2$ norm is a constant of the motion of DNLS. Thus, DNLS is globally well posed in $L^2(h\mathbb{Z})$. 

\medskip

We introduce the homogeneous discrete Sobolev norms by analogy with respect to the continuous homogeneous Sobolev norms. If $n\in \mathbb{N}$ is an integer and $\ug \in L^2(h\mathbb{Z})$, its discrete homogeneous Sobolev norm of order $n$ is defined by
\begin{equation}
\label{def_high_Sobolev}
\| \ug \|_{\dot H^n(h\mathbb{Z})}^2 = \langle (-\Delta_h)^n \ug , \ug \rangle_{L^2(h\mathbb{Z})}.
\end{equation}
For example, if $\ug \in L^2(h\mathbb{Z})$, its discrete homogeneous Sobolev norm of order $1$ is
\[ \| \ug \|_{\dot H^1(h\mathbb{Z})} =\sqrt{ h \sum_{g\in h\mathbb{Z}} \left| \frac{\ug_{g+h}-\ug_g}h \right|^2}. \]
Naturally, we define as usual the non homogeneous discrete Sobolev norms by
\[ \| \ug \|_{H^n(h\mathbb{Z})}^2 = \sum_{k=0}^n \| \ug \|_{\dot H^k(h\mathbb{Z})}^2.\]
\medskip

Applying the triangle inequality we can easily prove that all these norms are controlled by the discrete $L^2$ norm
\begin{equation}
\label{trivial_est}
 \forall \ug \in L^2(h\mathbb{Z}), \ \| \ug \|_{\dot H^n(h\mathbb{Z})} \leq \left( \frac2h \right)^n \|\ug \|_{L^2(h\mathbb{Z})}.
\end{equation}
So, since the discrete $L^2$ norm is a constant of the motion of DNLS, any discrete Sobolev norm of a solution of DNLS is globally bounded. However, this bound is not uniform with respect to the stepsize $h$. Consequently, these estimates are trivial when we consider the continuous limit. 

\medskip

An uniform control of these norms with respect to $h$ may be crucial to establish aliasing\footnote{Aliasing usually refers to a default of commutation between a nonlinearity and an interpolation.} or consistency estimates. For example, in \cite{ref_traveling_wave},  the existence and the stability of traveling waves is studied near the continuous limit of the focusing DNLS. The discrete Sobolev norms are used to control an aliasing error generated by the variations of the momentum (see Theorem $1.5$ of \cite{ref_traveling_wave}). It is proven that if for all $n\in \mathbb{N}$, the discrete Sobolev norm of order $n$ of the solutions of the focusing DNLS can be bounded by $t^{\alpha_n}$, uniformly with respect to $h$, then DNLS admits solutions whose behavior is similar to traveling waves for times of order $h^{-\beta}$, with $\beta =\limsup_n \frac{n}{\alpha_n}$.

\medskip

There is a huge literature about the growth of the Sobolev norms for continuous Schr\"odinger equations. Since, we are focusing on the continuous limit of DNLS, it is natural to try to adapt the methods used for these equations. If we focus on the continuous Schr\"odinger equations on $\mathbb{R}$, it seems that there are three families of methods and results.

\begin{itemize}
\item First, there is the cubic nonlinear Schr\"odinger equation. This equation is known to be \it completely integrable\rm. In particular, it admits a sequence of constants of the motion coercive in $H^n(\mathbb{R})$. Consequently, all the Sobolev norms are globally bounded (see, for example, \cite{MR2996998}).
\item Second, there is the linear Schr\"odinger equation with a potential smooth with respect to $t$ and $x$. In such case, for all $\varepsilon>0$ there is a control of the growth by $t^{\varepsilon}$ (see \cite{MR1753490}).
\item Third, in the other cases, there are methods using dispersion and/or higher modified energy. They were first introduced by Bourgain \cite{MR1386079} and continued in the work of Staffilani \cite{MR1427847}. They provide a control of the growth of the  $H^n$ norm by $t^{\alpha n + \beta}$ for some $\alpha,\beta \in \mathbb{R}$. More recently, applying these methods \cite{MR2996998},  Sohinger  proves a control of the $H^s$ norm by $t^{\frac13 s+}$ for the nonlinear Schr\"odinger equation with an Hartree nonlinearity.
\end{itemize} 

\medskip

A priori, DNLS is not a completely integrable equation, so we can not control its Sobolev norms as for its continuous limit (for a completely integrable spatial discretization of NLS, we can refer to the Ablowitz-Ladik model, see \cite{MR2040621}). In this paper, we adapt the last method to the discrete nonlinear Schr\"odinger equation. In \cite{MR2150357}, Stefanov and Kevrekidis proved that the dispersion is weaker for the linear discrete Schr\"odinger equation than for the continuous equation. They got a $L^\infty$ decay of the form $t^{-\frac12} + (ht)^{-\frac13}$ (see also \cite{MR2277106}). Using dispersive arguments in our setting seems thus more difficult than in the continuous case and does not seem to strengthen significantly the results. However,  the method of constructing \it modified energies \rm can be applied and turns out to yield results comparable to the continuous case  (i.e. a polynomial bound whose exponent is proportional to the index of the Sobolev norm detailed above). More precisely, with  our construction, we get the following bound.

\begin{theorem}
\label{thm_growth}
  For all $n\in \mathbb{N}^*$, there exists $C>0$, such that for all $h>0$ and all $\nu\in \{-1,1\}$,  if $\ug \in C^1(\mathbb{R};L^2(h\mathbb{Z}))$ is a solution of DNLS  then for all $t\in \mathbb{R}$
  \begin{equation}
  \label{ineq_growth}
 \| \ug(t) \|_{\dot{H}^{n}(h\mathbb{Z})} \leq C \left[ \| \ug(0) \|_{\dot{H}^{n}(h\mathbb{Z})} +   M_{\ug(0)}^{\frac{2n+1}3}   +  |t|^{\frac{n-1}2} M_{\ug(0)}^{\frac{4n-1}3}  \right] ,  
\end{equation}
where
\[  M_{\ug(0)} = \|\ug(0) \|_{\dot{H}^1(h\mathbb{Z})} +\|\ug(0) \|_{L^2(h\mathbb{Z})}^3  .\]
\end{theorem}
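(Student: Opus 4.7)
The plan is to adapt the Bourgain--Staffilani modified energy strategy to the discrete setting, working on the Fourier side. The discrete Fourier transform identifies $L^2(h\mathbb{Z})$ with $L^2(\mathbb{T}_h)$ where $\mathbb{T}_h = \mathbb{R}/(2\pi h^{-1}\mathbb{Z})$, sends $-\Delta_h$ to multiplication by the symbol $\lambda(\omega)=(4/h^2)\sin^2(h\omega/2)$, and turns the cubic nonlinearity into a trilinear convolution supported on the resonance plane $\omega_1-\omega_2+\omega_3-\omega\equiv 0\pmod{2\pi h^{-1}}$. In particular $\|\ug\|_{\dot H^n(h\mathbb{Z})}^2=\int_{\mathbb{T}_h}\lambda(\omega)^n|\hat\ug(\omega)|^2\, d\omega$.

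Start from the base energy $E_n(\ug)=\|\ug\|_{\dot H^n(h\mathbb{Z})}^2$. Differentiating along DNLS the quadratic contribution cancels (the linear semigroup is unitary on every $\dot H^n$) and one is left with a quartic integral whose integrand carries the factor $\Lambda_n:=\lambda_1^n-\lambda_2^n+\lambda_3^n-\lambda^n$. The key algebraic observation is that $\Lambda_n$ is divisible by the phase gap $\Omega:=\lambda_1-\lambda_2+\lambda_3-\lambda$ in the sense that $\Lambda_n/\Omega$ extends continuously to a symmetric polynomial $P_n$ on the resonant variety $\{\Omega=0\}$. This suggests the modified energy
\[ \widetilde E_n \;=\; E_n \;+\; \nu\,\Imag\int_{\omega_1-\omega_2+\omega_3-\omega\equiv 0}\frac{P_n(\lambda_1,\lambda_2,\lambda_3,\lambda)}{\Omega}\,\hat\ug(\omega_1)\overline{\hat\ug(\omega_2)}\hat\ug(\omega_3)\overline{\hat\ug(\omega)}. \]
Differentiating $\widetilde E_n$ along DNLS, the linear contribution of the correction cancels the quartic remainder in $\dot E_n$, and only a sextic remainder $R_n$ survives; the whole scheme reduces to proving (i) a uniform-in-$h$ bound on the multiplier $P_n/\Omega$ away from true resonances, and (ii) an $L^2$-type estimate on the sextic remainder that stays polynomial in low-order Sobolev norms.

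The main obstacle is precisely step (i): in the discrete setting $\Omega$ is a combination of four sine squares, and the umklapp condition $\omega_1-\omega_2+\omega_3-\omega\equiv 0\pmod{2\pi/h}$ produces additional resonances that do not appear in the continuous case. I would factor $\Omega$ trigonometrically, typically via
\[ \Omega \;=\; \tfrac{4}{h^2}\bigl[\sin(h(\omega_1-\omega_2)/2)\sin(h(\omega_1-\omega)/2) + (\text{symmetric terms})\bigr], \]
and show that on the support of the convolution constraint, $P_n/\Omega$ is dominated by $C\,(\lambda_1+\lambda_2+\lambda_3+\lambda)^{n-1}$ uniformly in $h$, with the worst factor comparable to an $\dot H^{n-1/2}$ weight on two of the four frequencies. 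On the resonant manifold itself the numerator $\Lambda_n$ also vanishes, so nothing needs to be removed: the ratio extends continuously. With such a bound, the correction satisfies $|\widetilde E_n-E_n|\lesssim \|\ug\|_{\dot H^n}\|\ug\|_{\dot H^{n-1}}\|\ug\|_{L^2}^2$ and the remainder $|R_n|\lesssim \|\ug\|_{\dot H^n}^2\,Q(\|\ug\|_{\dot H^1},\|\ug\|_{L^2})$ for some polynomial $Q$, uniformly in $h$; here the $L^2$ Banach algebra property of the discrete setting replaces the Strichartz estimates used in the continuous case.

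To conclude, recall that the $L^2$ norm and the Hamiltonian energy are exactly conserved by DNLS; together with a discrete Gagliardo--Nirenberg inequality they yield the uniform bound $\|\ug(t)\|_{\dot H^1(h\mathbb{Z})}\lesssim M_{\ug(0)}$ (which explains the definition of $M_{\ug(0)}$ in the theorem). Plugging this into the sextic estimate and applying Gronwall to $\widetilde E_n(t)$ gives
\[ \widetilde E_n(t) \;\leq\; \widetilde E_n(0) \;+\; C\,|t|\,M_{\ug(0)}^{\alpha}\,\widetilde E_n(t)^{1-\frac{1}{2n}} \]
for a suitable $\alpha$ determined by the Gagliardo--Nirenberg interpolation used to turn the low-order Sobolev norms in $R_n$ into combinations of $\|\ug\|_{\dot H^1}$ and $\|\ug\|_{L^2}$. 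Bootstrapping and taking square roots yields the announced $|t|^{(n-1)/2}$ growth rate with the precise exponents $(2n+1)/3$ and $(4n-1)/3$ of $M_{\ug(0)}$ coming from the optimisation of those interpolation inequalities. The principal technical burden is packaged entirely in the uniform-in-$h$ estimate of $P_n/\Omega$ and in the multilinear bound for $R_n$; once those are established, the polynomial growth follows mechanically.
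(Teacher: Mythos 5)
Your overall framework (modified energy with a quartic correction whose multiplier is the quotient of the derivative's symbol by the resonance function) is the same as the paper's, but the proposal breaks down at the step you yourself identify as the main obstacle, and in a way that cannot be repaired for the weight you chose. With $h=1$ and $\lambda(\omega)=2-2\cos\omega$, your claim that $\Lambda_n=\lambda_1^n-\lambda_2^n+\lambda_3^n-\lambda^n$ vanishes on the full resonant variety $\{\Omega=0\}$ is false as soon as $n\geq 2$, because of the umklapp resonances. For example, $w_1=\pi$, $w_2=0$, $w_{-1}=w_{-2}=-\pi/2$ satisfies $w_1+w_2-w_{-1}-w_{-2}=2\pi\equiv 0$, and there $\Omega=-2(\cos w_1+\cos w_2-\cos w_{-1}-\cos w_{-2})=0$ while $\cos(2w_1)+\cos(2w_2)-\cos(2w_{-1})-\cos(2w_{-2})=4\neq 0$; since $\lambda^2=6-8\cos\omega+2\cos(2\omega)$, one gets $\Lambda_2\neq 0$ at a point where $\Omega=0$. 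So $P_n=\Lambda_n/\Omega$ does not extend continuously, and the correction term you propose is genuinely singular. This is exactly the ``algebraic obstruction'' the paper's Lemma \ref{lem_constructor} isolates: divisibility of $D_2f$ by $D_2\cos$ on $\mathcal{V}_2$ requires the Fourier series of $f$ to contain only odd harmonics $\cos((2k+1)\omega)$ (equivalently, $f$ even and $f-f(\pi/2)$ odd about $\pi/2$), a condition violated by $\lambda^n$ and by $\omega^{2n}$. The paper's fix --- replacing $\lambda^n$ by the bespoke trigonometric polynomial $f_n$ of \eqref{choice_energy}, which has the right parities and is still comparable to $\omega^{2n}$ on $(-\pi,\pi)$ --- is the essential new idea, and it is absent from your proposal.

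There is a second, quantitative gap. Your stated remainder bound $|R_n|\lesssim\|\ug\|_{\dot H^n}^2\,Q(\|\ug\|_{\dot H^1},\|\ug\|_{L^2})$ is proportional to the energy itself, so Gr\"onwall would only return the exponential bound the modified energy was designed to beat; and the inequality $\widetilde E_n(t)\leq \widetilde E_n(0)+C|t|M^{\alpha}\widetilde E_n(t)^{1-\frac1{2n}}$ you then invoke is inconsistent with that bound and would in any case yield $\|\ug(t)\|_{\dot H^n}\lesssim |t|^{n}$ rather than $|t|^{(n-1)/2}$. The correct mechanism, as in the paper, is that the sextic remainder loses a full derivative --- it is controlled by $\|\partial_x^{n-1}u\|_{L^2}^2\|\partial_x u\|_{L^2}^2\|u\|_{L^2}^2$ (Lemma \ref{lem_hehe}) --- and one then runs an induction on $n$: the induction hypothesis gives $\|\partial_x^{n-1}u(t)\|_{L^2}^2\lesssim |t|^{n-2}$, integration in time gives $|t|^{n-1}$ for the $n$-th energy, and Young/H\"older interpolation produces the exponents $\frac{2n+1}3$ and $\frac{4n-1}3$ of $M_{\ug(0)}$. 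Without the loss of a derivative in the remainder and the induction on $n$, the announced growth rate does not follow.
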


This theorem is the main result of this paper, it will be proven in the third section. The second section is devoted to the introduction of tools and notations useful to prove it.

\medskip

We conclude this introduction with some remarks about estimate \eqref{ineq_growth}. 
\begin{itemize}
\item If $n=1$ then the discrete $H^1$ norm is globally bounded, uniformly with respect to $h$. It is a consequence of the conservation of the Hamiltonian of DNLS and its coercivity in $H^1(h\mathbb{Z})$. In the focusing case this argument is specific to the dimension $1$. It is based on a discrete  Gagliardo-Nirenberg inequality. For the defocusing case, the coercivity is straighforward and can be extended to higher dimensions and with other nonlinearities.
\item The factor associated to the growing term $t^{\frac{n-1}2}$ is $M_{\ug(0)}^{\frac{4n-1}3}$. So the growth of the high Sobolev norms is controlled by the size of the initial condition with respect to the low Sobolev norms.
\item The estimate \eqref{ineq_growth} is homogeneous. More precisely, DNLS is invariant by dilatation in the sense that if $\ug$ is a solution of DNLS then $(t,g) \mapsto \lambda \ug_{\lambda g}(\lambda^2 t )$ is a solution of DNLS with stepsize $h\lambda^{-1}$. Estimate \eqref{ineq_growth} is invariant by this transformation (as can be seen from the exponents of $M_{\ug(0)}$). Consequently, to prove Theorem \ref{thm_growth}, we just have to prove it with $h=1$.
\item   Here, the construction of higher modified energies relies essentially algebraic considerations. In particular, it does not use any dispersion effect. So it seems possible to realize almost the same proof to get estimate \eqref{ineq_growth} with periodic boundary conditions.

\end{itemize}

\section{Shannon interpolation}
In order to use classical analysis tools, it is very useful to identify sequences of $L^2(\mathbb{Z})$ with functions defined on the real line through an interpolation method. Here, we choose the Shannon interpolation (this choice is quite natural, see \cite{MR2277106} or \cite{ref_traveling_wave}). More precisely, it is the usual interpolation  we get extending a sequence into a real function whose Fourier transform is supported on $[-\pi,\pi]$.

\medskip

In this section, we introduce this interpolation and we give some of its classical properties useful to prove Theorem \ref{thm_growth}. For details or proofs of these classical properties the reader can refer to \cite{ref_traveling_wave} or \cite{MR3156669}.

\medskip

First we need to define the \it discrete Fourier transform \rm
\begin{equation}
\label{def_discrete_FT}
\mathcal{F} : \left\{  \begin{array}{llll} L^2(\mathbb{Z}) & \to & L^2(\mathbb{R} / 2\pi \mathbb{Z}) \\
																\ug & \mapsto &\displaystyle \omega \mapsto \sum_{g \in \mathbb{Z}} \ug_g e^{ i g \omega}
\end{array} \right. ,
\end{equation}
 and the \it Fourier Plancherel transform \rm
\[ \mathscr{F} : \left\{  \begin{array}{llll} L^2(\mathbb{R}) & \to & L^2(\mathbb{R}) \\
																u & \mapsto & \displaystyle \omega\mapsto \int_{\mathbb{R}} u(x) e^{i x \omega} \dx
\end{array}  \right. \]
where the right integral is defined by extending the operator defined on $L^1(\mathbb{R})\cap L^2(\mathbb{R})$. We also use the notation $\widehat{u} = \mathscr{F} u$.

\medskip

Now, we define the \it Shannon interpolation \rm, denoted  $\mathcal{I}$,  through the following diagram 
\begin{equation}
\label{def_Shannon_interpolation}
\xymatrixcolsep{5pc} \xymatrix{  L^2(\mathbb{Z})  \ar[r]^{ \mathcal{F} } \ar@/_1pc/[rrr]_{\mathcal{I}} & L^2(\mathbb{R} / 2\pi \mathbb{Z})  \ar[r]^{ u \mapsto \mathbb{1}_{(-\pi,\pi)} u }  & 
 L^2(\mathbb{R})  \ar[r]^{ \mathscr{F}^{-1} } &  L^2(\mathbb{R}) } ,
\end{equation}
where $\mathbb{1}_{(-\pi,\pi)}:\mathbb{R}\to \mathbb{R}$ the characteristic function of $(-\pi,\pi)$ and $ \mathscr{F}^{-1} $ is the inverse of the Fourier Plancherel transform.

\medskip

It is possible to deduce a very explicit formula to determine $\mathcal{I} \ug$ from $\ug$. Indeed, for $\ug\in L^2(\mathbb{Z})$ and $x\in \mathbb{R}$, we have 
$$
\mathcal{I} \ug (x)= \sum_{g \in \mathbb{Z}} \ug_g \textrm{sinc} (\pi (x-g) ), 
$$
where the sum converges in $L^2(\mathbb{R}) \cap L^{\infty}(\mathbb{R})$ and $\textrm{sinc}(x) = \frac{\sin(x)}x$, denotes the cardinal sine function.

In the following proposition, we give some properties of this interpolation useful to prove Theorem \ref{thm_growth} .
\begin{proposition} (see, for example Chapter $5.4$ in \cite{MR3156669}, for details)
\label{propo_Shannon}
\begin{itemize} 
\item $\mathcal{I}$ is an isometry, i.e.
\[ \forall \ug \in L^2(\mathbb{Z}), \ \sum_{g \in \mathbb{Z}} |\ug_g|^2 = \int_{\mathbb{R}} |\mathcal{I}\ug(x)|^2 \dx. \]  
\item The image of $\mathcal{I}$ is the set of functions whose Fourier support is a subset of $[-\pi,\pi]$. It is denoted by
\[ BL^2 := \mathcal{I}(L^2(\mathbb{Z})) =  \{ u \in L^2(\mathbb{R}) \ | \ {\rm Supp } \ \widehat{u} \subset [-\pi,\pi]\}. \]
\item If $\ug \in L^2(\mathbb{Z})$ then $\mathcal{I}\ug$ is an entire function  which  $\ug$ is the restriction on $\mathbb{Z}$, i.e.
\[  \forall g \in \mathbb{Z}, \ \left( \mathcal{I}\ug \right)(g) = \ug_g .\] 
\end{itemize}
\end{proposition}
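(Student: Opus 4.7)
The plan is to prove the three items separately by unpacking the definition of $\mathcal{I}$ given in diagram \eqref{def_Shannon_interpolation} and invoking the Parseval/Plancherel identities at each arrow in the composition $\mathcal{I} = \mathscr{F}^{-1} \circ (v\mapsto \mathbb{1}_{(-\pi,\pi)}v) \circ \mathcal{F}$. All three claims are essentially bookkeeping once one keeps track of the $2\pi$-normalisations.

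For the isometry statement, I would use the Parseval identity for Fourier series, namely $\sum_{g\in\mathbb{Z}}|\ug_g|^2 = \frac{1}{2\pi}\int_{-\pi}^{\pi}|\mathcal{F}\ug(\omega)|^2\domega$. Viewing $\mathcal{F}\ug$ as a $2\pi$-periodic function and restricting it to $(-\pi,\pi)$ gives a function in $L^2(\mathbb{R})$ whose $L^2(\mathbb{R})$-norm squared equals $\int_{-\pi}^{\pi}|\mathcal{F}\ug(\omega)|^2\domega$. Applying Plancherel for $\mathscr{F}^{-1}$, which contributes the factor $\frac{1}{2\pi}$, the constants cancel and I obtain $\|\mathcal{I}\ug\|_{L^2(\mathbb{R})}^2 = \sum_g|\ug_g|^2$.

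For the image statement, the inclusion $\mathcal{I}(L^2(\mathbb{Z}))\subset BL^2$ is immediate from the definition: by construction $\widehat{\mathcal{I}\ug} = \mathbb{1}_{(-\pi,\pi)}\mathcal{F}\ug$, which is supported in $[-\pi,\pi]$. For the reverse inclusion, given $u\in L^2(\mathbb{R})$ with $\supp\widehat{u}\subset[-\pi,\pi]$, I would extend $\widehat{u}_{|(-\pi,\pi)}$ into a $2\pi$-periodic function $v\in L^2(\mathbb{R}/2\pi\mathbb{Z})$, define $\ug := \mathcal{F}^{-1}v\in L^2(\mathbb{Z})$ via the usual Fourier coefficient formula, and check directly from the definition of $\mathcal{I}$ that $\mathcal{I}\ug = u$.

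For the third item, I would use the explicit formula
\[ \mathcal{I}\ug(x) = \frac{1}{2\pi}\int_{-\pi}^{\pi}\mathcal{F}\ug(\omega)\, e^{-ix\omega}\domega, \]
which follows from the definition because $\mathbb{1}_{(-\pi,\pi)}\mathcal{F}\ug\in L^1(\mathbb{R})\cap L^2(\mathbb{R})$ (since its support is compact and it is $L^2$). As the integrand is entire in $x$ with an integrable, $x$-independent bound on any compact set of $\mathbb{C}$, differentiation under the integral sign shows that $\mathcal{I}\ug$ extends to an entire function of $x$. To get the interpolation property, evaluate at $x=g_0\in\mathbb{Z}$ and insert the definition of $\mathcal{F}\ug$; exchanging sum and integral (justified by Cauchy-Schwarz and the compact domain) reduces the claim to the orthogonality relations $\frac{1}{2\pi}\int_{-\pi}^{\pi}e^{i(g-g_0)\omega}\domega = \delta_{g,g_0}$, which yields $\mathcal{I}\ug(g_0) = \ug_{g_0}$.

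The main obstacle is purely notational rather than mathematical: being careful with the normalisation of the Fourier series versus the Fourier-Plancherel transform (both are stated here without the $\frac{1}{\sqrt{2\pi}}$ prefactor), and with the identification between $L^2(\mathbb{R}/2\pi\mathbb{Z})$ and $L^2((-\pi,\pi))$ used to define multiplication by $\mathbb{1}_{(-\pi,\pi)}$ as a map into $L^2(\mathbb{R})$. Once these identifications are fixed, each item reduces to one line.
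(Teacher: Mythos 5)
Your proof is correct, and it is the standard argument: the paper itself gives no proof of this proposition, delegating it entirely to the cited reference, and what you write (Parseval for the series, Plancherel for $\mathscr{F}^{-1}$ with the $2\pi$ cancelling, the two inclusions for the image, and the Paley--Wiener/orthogonality computation for entirety and interpolation at the integers) is exactly the classical derivation that reference contains. The normalisation bookkeeping you flag is indeed the only delicate point, and you handle it consistently with the paper's conventions for $\mathcal{F}$ and $\mathscr{F}$.
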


\medskip

Now, we focus on properties more specific to the discrete Sobolev norms.
\begin{proposition} (see Proposition $2.6$ in \cite{ref_traveling_wave})
\label{prop_identify}
Let $\ug \in L^2(\mathbb{Z})$ be a sequence and let $u = \mathcal{I}\ug$ denote its Shannon interpolation. Then we have for almost all $\omega \in (-\pi,\pi)$
\[ \widehat{\mathcal{I}\Delta_1\ug}(\omega) = (2\cos(\omega) - 2) \widehat{u}(\omega)  = - 4 \left(\sin\left( \frac{\omega}2 \right)\right)^2 \widehat{u}(\omega) \]
and
\[ \widehat{\mathcal{I}|\ug|^2\ug}(\omega) = \sum_{k\in \mathbb{Z}} \widehat{|u|^2 u}(\omega + 2k\pi) = \sum_{k=-1}^1 \widehat{u}*\widehat{\bar{u}}*\widehat{u}(\omega + 2k\pi), \]
where $*$ is the usual convolution product.
\end{proposition}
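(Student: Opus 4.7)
The plan is to extract from the defining diagram \eqref{def_Shannon_interpolation} the master identity
\[\widehat{\mathcal{I}\boldsymbol{w}}(\omega) = \mathbb{1}_{(-\pi,\pi)}(\omega)\,\mathcal{F}\boldsymbol{w}(\omega), \qquad \boldsymbol{w}\in L^2(\mathbb{Z}),\]
and to apply it separately to $\boldsymbol{w}=\Delta_1\ug$ and $\boldsymbol{w}=|\ug|^2\ug$. The linear identity is then a one-line calculation: re-indexing the series defining $\mathcal{F}$ gives $\mathcal{F}(\ug_{\cdot\pm 1})(\omega)=e^{\mp i\omega}\mathcal{F}\ug(\omega)$, hence $\mathcal{F}(\Delta_1\ug)(\omega)=(e^{i\omega}+e^{-i\omega}-2)\mathcal{F}\ug(\omega)=(2\cos\omega-2)\mathcal{F}\ug(\omega)$, and on $(-\pi,\pi)$ the factor $\mathcal{F}\ug$ coincides with $\widehat{u}$ by the very definition of $u=\mathcal{I}\ug$.

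The nonlinear identity proceeds in two steps. First, I would use the sampling property $\mathcal{I}\ug(g)=\ug_g$ from Proposition \ref{propo_Shannon} to identify $|\ug|^2\ug$ as the restriction of the continuous function $|u|^2u$ to $\mathbb{Z}$, and then invoke the classical aliasing identity
\[\mathcal{F}(f|_{\mathbb{Z}})(\omega) = \sum_{k\in\mathbb{Z}}\widehat{f}(\omega + 2k\pi),\]
a restatement of the Poisson summation formula, with $f=|u|^2u$; restricting to $(-\pi,\pi)$ via the master identity yields the first equality. For the second equality, I would use that $\mathscr{F}$ turns pointwise products into convolutions (up to a $2\pi$ normalization absorbed in the author's convention), giving $\widehat{|u|^2u}=\widehat{u}\ast\widehat{\bar{u}}\ast\widehat{u}$. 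Since $u\in BL^2$ we have $\supp\widehat{u}\subset[-\pi,\pi]$, and the identity $\widehat{\bar{u}}(\omega)=\overline{\widehat{u}(-\omega)}$ gives the same support for $\widehat{\bar{u}}$. The triple convolution is therefore supported in $[-3\pi,3\pi]$, so for $\omega\in(-\pi,\pi)$ only the three shifts $\omega+2k\pi$ with $k\in\{-1,0,1\}$ can meet the support, truncating the infinite sum to three terms.

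The only step that is not pure bookkeeping is the aliasing identity, and this is where I would expect the technical subtlety to lie. It is however mild in our setting: $u\in BL^2$ means $\widehat{u}\in L^2$ with compact support, whence $\widehat{u}\in L^1$ and $u\in L^\infty\cap L^2$, so $|u|^2u\in L^2$ with a continuous, compactly supported Fourier transform. This places us comfortably within the classical hypotheses for Poisson summation, so I would appeal to the treatments of \cite{MR3156669} or the companion paper \cite{ref_traveling_wave} for its justification rather than reprove it here.
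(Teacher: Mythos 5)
Your argument is correct and is essentially the standard one; note that the paper itself gives no proof of this proposition, deferring entirely to Proposition 2.6 of \cite{ref_traveling_wave}, so there is nothing internal to compare against. The only point where your justification is slightly loose is the appeal to the ``classical hypotheses for Poisson summation'': those require pointwise decay of $|u|^2u$ in $x$, which does not follow from $u\in L^2\cap L^\infty$. The clean fix is to run the identity in the other direction: since $\widehat{|u|^2u}$ is continuous and supported in $[-3\pi,3\pi]$, its periodization is a \emph{finite} sum, hence a continuous $2\pi$-periodic function whose Fourier coefficients are $(|u|^2u)(n)=|\ug_n|^2\ug_n$ by Fourier inversion (legitimate because $\widehat{|u|^2u}\in L^1$); as $|\ug|^2\ug\in\ell^2$, this periodization coincides with $\mathcal{F}(|\ug|^2\ug)$ in $L^2(\mathbb{R}/2\pi\mathbb{Z})$, which is exactly the claimed almost-everywhere identity.
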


\medskip

We deduce two important direct corollaries of this proposition. In the first one we identify the differential equation satisfied by the Shannon interpolation of a solution of DNLS.
\begin{corollary} Let $\ug \in C^1(\mathbb{R};L^2(\mathbb{Z}))$ be a solution of DNLS and let $u = \mathcal{I}\ug \in C^1(\mathbb{R};BL^2)$ denote its Shannon interpolation, then for all $t\in \mathbb{R}$ and almost all $\omega\in (-\pi, \pi)$,
\[ i\partial_t \widehat{u}(t,\omega) =  -4 \left(\sin\left( \frac{\omega}2 \right)\right)^2 \widehat{u}(\omega) +\nu \sum_{k=-1}^1 \widehat{u}*\widehat{\bar{u}}*\widehat{u}(\omega + 2k\pi).   \]
\end{corollary}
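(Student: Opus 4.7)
The proof is essentially a direct computation combining Proposition~\ref{prop_identify} with the linearity and continuity of the Shannon interpolation $\mathcal{I}$. My plan is as follows.

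First, since $\mathcal{I} : L^2(\mathbb{Z}) \to L^2(\mathbb{R})$ is an isometry (Proposition~\ref{propo_Shannon}) and in particular a bounded linear operator, it commutes with the time derivative: because $\ug \in C^1(\mathbb{R}; L^2(\mathbb{Z}))$, the map $t \mapsto u(t) = \mathcal{I}\ug(t)$ belongs to $C^1(\mathbb{R}; L^2(\mathbb{R}))$ with $\partial_t u = \mathcal{I}\partial_t \ug$, and its image lies in $BL^2$ by the second item of Proposition~\ref{propo_Shannon}. Applying $\mathcal{I}$ to both sides of \eqref{def_DNLS} (with $h = 1$) therefore gives
\[
 i\partial_t u = \mathcal{I}\Delta_1 \ug + \nu\, \mathcal{I}(|\ug|^2 \ug)
\]
as an identity in $L^2(\mathbb{R})$, uniformly in $t$.

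Next, I apply the Fourier--Plancherel transform $\mathscr{F}$, which is continuous on $L^2(\mathbb{R})$ and hence commutes with $\partial_t$ as well. Proposition~\ref{prop_identify} then identifies the two terms on the right-hand side explicitly: for almost every $\omega \in (-\pi, \pi)$,
\[
 \widehat{\mathcal{I}\Delta_1 \ug}(\omega) = -4\sin^2\!\left(\tfrac{\omega}{2}\right) \widehat{u}(\omega),
 \qquad
 \widehat{\mathcal{I}(|\ug|^2 \ug)}(\omega) = \sum_{k=-1}^{1} \widehat{u} * \widehat{\bar u} * \widehat{u}(\omega + 2k\pi).
\]
Summing these two identities with the coefficient $\nu$ in front of the second one and using $i\partial_t \widehat{u} = \widehat{i\partial_t u}$ yields the claimed evolution equation.

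There is no real obstacle: the only subtlety is the justification that $\mathcal{I}$ and $\mathscr{F}$ commute with $\partial_t$, which is immediate from their continuity as linear operators acting on a $C^1$ function with values in a Banach space. The restriction to $\omega \in (-\pi, \pi)$ is natural because the Fourier transforms of all functions appearing in the equation are supported in $[-\pi, \pi]$, as they belong to $BL^2$.
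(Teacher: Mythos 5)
Your proof is correct and is exactly the argument the paper intends: it states this as a direct corollary of Proposition~\ref{prop_identify} without further proof, and your filling-in of the details (continuity of $\mathcal{I}$ and $\mathscr{F}$ to commute with $\partial_t$, then term-by-term identification) is the natural and expected justification.
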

In the second corollary, we identify the discrete Sobolev norms.
\begin{corollary}
\label{corol_est_norms}
 Let $\ug \in L^2(\mathbb{Z})$ be a sequence, let $u = \mathcal{I}\ug$ denote its Shannon interpolation. If $n\in \mathbb{N}^*$ then
\[ \| \ug\|_{\dot H^n(\mathbb{Z})}^2 = \frac1{2\pi} \int 2^{2n} \left(\sin\left( \frac{\omega}2 \right)\right)^{2n} |\widehat{u}(\omega)|^2 \domega.   \]
Consequently, the continuous and discrete homogeneous Sobolev norms are equivalents, i.e.
\[ \left(  \frac2\pi \right)^n \|\partial_x^n u \|_{L^2(\mathbb{R})}  \leq  \| \ug\|_{\dot H^n(\mathbb{Z})} \leq \|\partial_x^n u \|_{L^2(\mathbb{R})}  .\]
\end{corollary}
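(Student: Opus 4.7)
The plan is to express the discrete $\dot H^n$ norm on the Fourier side via the Shannon interpolation and then compare with $\|\partial_x^n u\|_{L^2(\mathbb{R})}$ using elementary bounds on $\sin(\omega/2)$.

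\textbf{Step 1: rewrite the norm via Shannon.} Starting from the definition \eqref{def_high_Sobolev}, I write $\|\ug\|_{\dot H^n(\mathbb{Z})}^2=\langle (-\Delta_1)^n\ug,\ug\rangle_{L^2(\mathbb{Z})}$ and use that $\mathcal{I}$ is an isometry (Proposition \ref{propo_Shannon}) to transfer the inner product to $L^2(\mathbb{R})$:
\[
\|\ug\|_{\dot H^n(\mathbb{Z})}^2=\langle \mathcal{I}(-\Delta_1)^n\ug,\mathcal{I}\ug\rangle_{L^2(\mathbb{R})}.
\]
Now I iterate the first identity of Proposition \ref{prop_identify}: since $\mathcal{I}(-\Delta_1)\vg$ has Fourier transform $4\sin^2(\omega/2)\widehat{\mathcal{I}\vg}$, and the multiplier $4\sin^2(\omega/2)$ preserves the band-limited class $BL^2$, an easy induction gives
\[
\widehat{\mathcal{I}(-\Delta_1)^n\ug}(\omega)=\bigl(4\sin^2(\omega/2)\bigr)^n\,\hat u(\omega)\qquad\text{a.e. on }(-\pi,\pi).
\]

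\textbf{Step 2: apply Plancherel.} With the convention $\hat u(\omega)=\int u(x)e^{ix\omega}\dx$, Plancherel on $L^2(\mathbb{R})$ yields an extra $1/(2\pi)$, so combining with Step 1,
\[
\|\ug\|_{\dot H^n(\mathbb{Z})}^2=\frac{1}{2\pi}\int_{-\pi}^{\pi} 2^{2n}\sin^{2n}(\omega/2)\,|\hat u(\omega)|^2\,\domega,
\]
which is the stated identity (the integral is really over $(-\pi,\pi)$ because $\hat u$ is supported there by Proposition \ref{propo_Shannon}).

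\textbf{Step 3: comparison with the continuous norm.} For the continuous side, $\widehat{\partial_x^n u}(\omega)=(-i\omega)^n\hat u(\omega)$ and Plancherel give
\[
\|\partial_x^n u\|_{L^2(\mathbb{R})}^2=\frac{1}{2\pi}\int_{-\pi}^{\pi}\omega^{2n}|\hat u(\omega)|^2\,\domega,
\]
again using $\supp\hat u\subset[-\pi,\pi]$. It therefore suffices to compare the multipliers $(2\sin(\omega/2))^{2n}$ and $\omega^{2n}$ pointwise on $(-\pi,\pi)$. Since $\omega/2\in(-\pi/2,\pi/2)$, the classical inequality $|\sin x|\le |x|$ gives $2|\sin(\omega/2)|\le|\omega|$, while Jordan's inequality $|\sin x|\ge \frac{2}{\pi}|x|$ on $[-\pi/2,\pi/2]$ gives $2|\sin(\omega/2)|\ge \frac{2}{\pi}|\omega|$. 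Raising to the $2n$-th power, integrating against $|\hat u(\omega)|^2/(2\pi)$, and taking square roots produces exactly
\[
\Bigl(\frac{2}{\pi}\Bigr)^{n}\|\partial_x^n u\|_{L^2(\mathbb{R})}\le\|\ug\|_{\dot H^n(\mathbb{Z})}\le\|\partial_x^n u\|_{L^2(\mathbb{R})}.
\]

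There is no real obstacle here: the only points that need care are the iteration of Proposition \ref{prop_identify} (which works because the multiplier $4\sin^2(\omega/2)$ stabilises $BL^2$, so no aliasing term appears) and the correct handling of the $2\pi$ factor from the author's unnormalised Fourier convention.
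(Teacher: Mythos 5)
Your proof is correct and is exactly the argument the paper has in mind when it calls this a ``direct corollary'' of Propositions \ref{propo_Shannon} and \ref{prop_identify}: iterate the Fourier multiplier identity for $\Delta_1$ (legitimate since the multiplier keeps the Fourier support in $[-\pi,\pi]$), apply Plancherel with the $1/(2\pi)$ from the unnormalised convention, and compare $(2\sin(\omega/2))^{2n}$ with $\omega^{2n}$ via $|\sin x|\le|x|$ and Jordan's inequality. Nothing to add.
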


\section{Proof of Theorem \ref{thm_growth}}

This section is devoted to the proof of Theorem \ref{thm_growth}. The idea is to construct some \it higher modified energies \rm controlling $H^n(h\mathbb{Z})$ norms and whose growth can be controlled by  $H^{n-1}(h\mathbb{Z})$ norms. The construction of \it higher modified energies \rm to study growth of Sobolev norms is a well known method (see \cite{MR1951312} or \cite{MR2996998}). 

\medskip

As explained at the end of the introduction, since inequality \eqref{ineq_growth} of Theorem \ref{thm_growth} is homogeneous, without loss of generality, we just need to prove it when $h=1$. 

\subsection{Construction of the modified energies}

DNLS is a Hamiltonian differential equation (see \cite{ref_traveling_wave}) whose Hamiltonian (i.e. its energy) is defined on $L^2(\mathbb{Z})$ by
\[ \HDNLS = \frac12 \| \cdot \|_{\dot H^1(\mathbb{Z})}^2 - \frac{\nu}4 \| \cdot \|_{L^4(\mathbb{Z})}^4.\]
So $\HDNLS(\ug)$ is a constant of the motion (it can be proven directly computing the discrete $L^2$ inner product of \eqref{def_DNLS} and $\ug(t)$). 

\medskip

If $u\in BL^2$ is the Shannon interpolation of a sequence $\ug\in L^2(\mathbb{Z})$ this Hamiltonian can be written as a function of $\widehat{u}$ (it is a consequence of Proposition \ref{prop_identify})
\begin{multline}
\label{hamil_four}
2\pi \HDNLS(\ug)= 
\frac12 \int \left(  2 \sin \frac{\omega}2 \right)^2 |\widehat{u}(\omega)|^2 \domega \\- \frac{\nu}4  \int_{w_1 +w_2 = w_{-1} + w_{-2} \mod 2\pi}  \widehat{u}(w_{1})\overline{\widehat{u}(w_{-1})}\widehat{u}(w_{2})\overline{\widehat{u}(w_{-2})}   {\rm d} w_1{\rm d} w_2{\rm d} w_{-1}.
\end{multline}
%

The principle of the construction of the modified energies is to change the weights of these integrals to get a control of high Sobolev norms. To explain this construction, we need to adopt more compact notations. Some of them are classical for NLS (see \cite{MR2996998}).

\medskip

 First, if $m\in \mathbb{N}^*$, we define $\mathcal{V}_{m}$ by
\[  \mathcal{V}_{m} := \left\{  w \in \mathbb{R}^{\llbracket  -m,m \rrbracket\setminus\{ 0\} } \ | \ \sum_{j=1}^m w_j-w_{-j} = 0 \mod 2\pi   \right\},\]
where $\llbracket -m,m \llbracket$ denotes the set $\{-m,\dots,m\}$,
and we equip it with its natural measure, denoted ${\rm d }w$, induced by the canonical Lesbegue measure of $\mathbb{R}^{2m}$.

\medskip

If $\mu \in L^{\infty}(\mathcal{V}_{m})$ and if $v\in L^2(\mathbb{R})$ is supported on $[-\pi,\pi]$, we define $\Lambda_{m}(\mu,v)$ by
\[ \Lambda_{m}(\mu,v) := \int_{\mathcal{V}_{m}} \mu(w)  \prod_{j=1}^m v(w_j)\overline{v(w_{-j})} {\rm d }w.   \]
To prove that $\Lambda_m$ is well defined, we just need to pay attention to the support of 
$$
w\mapsto \mu(w)  \prod_{j=1}^m v(w_j)\overline{v(w_{-j})}
$$ and to apply a convolution Young inequality (see Lemma \ref{lem_hehe} for details).

\medskip

For example, with this notation, we have a more compact expression of \eqref{hamil_four} given by
\begin{equation}
\label{formula_hamilt_comp}
 2\pi \HDNLS(\ug) = \frac12 \int \left(  2 \sin \frac{\omega}2 \right)^2 |\widehat{u}(\omega)|^2 \domega - \frac{\nu}4  \Lambda_2(\mathbb{1}_{\mathcal{V}_{2}}, \widehat{u}).
\end{equation}

\medskip

Then, we define a transformation $S_{m} : L^{\infty}(\mathcal{V}_{m}) \to L^{\infty}(\mathcal{V}_{m+1})$ by
\[ S_{m} \mu(w_{-m-1},w,w_{m+1}) = \sum_{k=1}^m  \mu(w + e_k (w_{m+1} - w_{-m-1}))  - \mu(w - e_{-k} (w_{m+1} - w_{-m-1})), \]
where $(e_k)_{k\in \llbracket  -m,m \rrbracket\setminus\{ 0\} }$ is the canonical basis of $ \mathbb{R}^{\llbracket  -m,m \rrbracket\setminus\{ 0\} }$.

\medskip

We define another transformation $D_m : L^{\infty}(\mathbb{R}) \to L^{\infty}(\mathcal{V}_{m})$ by
\[  D_{m} f(w) = \sum_{j=1}^m  f(w_j)  - f(w_{-j}).  \]

\medskip

We say that a function $\mu \in L^{\infty}(\mathcal{V}_{m})$ is $2\pi$ periodic with respect to each one of its variables, and we denote it by $\mu \in L^{\infty}_{\rm per}(\mathcal{V}_{m})$, if 
\[ \forall k \in \llbracket  -m,m \rrbracket\setminus\{ 0\},\  \mu(w+2\pi e_k) = \mu(w), \ w \ \rm{ a.e}.\]

\medskip

The following algebraic lemma explains why these notations are well suited to DNLS.
\begin{lemma} 
\label{lem_deriv_reso}
If $m \in \mathbb{N}^*$, $\mu \in L^{\infty}_{\rm per}(\mathcal{V}_{m})$ and $\ug \in C^1(\mathbb{R};L^2(\mathbb{Z}))$ is a solution of DNLS whose Shannon interpolation is denoted $u$, then we have
\begin{equation}
\label{id_algb}
i\partial_t \Lambda_{m}(\mu,\widehat{u}) = 2 \Lambda_{m}\left(\mu D_m \cos ,\widehat{u} \right) + \nu \Lambda_{m+1}(S_{m}\mu,\widehat{u}) .
\end{equation}
\end{lemma}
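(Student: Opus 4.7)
The plan is to compute $i\partial_t \Lambda_m(\mu, \widehat u)$ by differentiating under the integral sign and inserting the Fourier-side form of DNLS. By the first corollary following Proposition~\ref{prop_identify},
\[
i\partial_t \widehat u(\omega) = (2\cos\omega - 2)\,\widehat u(\omega) + \nu\, N(\omega), \quad N(\omega) := \sum_{\ell=-1}^1 \bigl(\widehat u * \widehat{\bar u} * \widehat u\bigr)(\omega + 2\ell\pi),
\]
for almost every $\omega \in (-\pi,\pi)$, while both $\widehat u$ and its time derivative vanish outside $(-\pi,\pi)$. Applying the Leibniz rule to the $2m$-fold product in $\Lambda_m(\mu,\widehat u)$ and multiplying by $i$, the differentiation of a factor $\widehat u(w_k)$ brings down $(2\cos w_k - 2)\widehat u(w_k) + \nu N(w_k)$, while the differentiation of a factor $\overline{\widehat u(w_{-k})}$ brings down $-(2\cos w_{-k}-2)\overline{\widehat u(w_{-k})} - \nu\overline{N(w_{-k})}$, the sign flip coming from the factor $i\cdot i = -1$ after conjugating the equation.

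The linear contributions recombine immediately: summing over $k$ the bracket $(2\cos w_k - 2) - (2\cos w_{-k} - 2) = 2(\cos w_k - \cos w_{-k})$ produces exactly $2 D_m\cos(w)$, giving $2\Lambda_m(\mu D_m\cos,\widehat u)$.

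For the cubic contributions, I expand each convolution
\[
\bigl(\widehat u * \widehat{\bar u} * \widehat u\bigr)(w_k + 2\ell\pi) = \int \widehat u(a)\,\overline{\widehat u(b)}\,\widehat u(c)\, \delta(a - b + c - w_k - 2\ell\pi)\, da\, db\, dc,
\]
and, for each $k$ and each $\ell$, perform the linear, unit-Jacobian change of variables $(w_{m+1}^{\rm new}, w_{-m-1}^{\rm new}, w_k^{\rm new}) := (a,b,c)$, keeping $w_j^{\rm new} := w_j^{\rm old}$ for the remaining indices. The delta fixes $w_k^{\rm old} = w_{m+1}^{\rm new} + w_k^{\rm new} - w_{-m-1}^{\rm new} - 2\ell\pi$, and substitution turns the $\mathcal V_m$ resonance on $w^{\rm old}$ into the $\mathcal V_{m+1}$ resonance on $w^{\rm new}$ (the $2\ell\pi$ is absorbed modulo $2\pi$). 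By the $2\pi$-periodicity of $\mu$, $\mu(w^{\rm old})$ collapses to $\mu\bigl(w^{\rm new} + e_k(w_{m+1}^{\rm new} - w_{-m-1}^{\rm new})\bigr)$, independently of $\ell$. The symmetric analysis applied to $\overline{N(w_{-k})}$, using the conjugate identity $\overline{(\widehat u * \widehat{\bar u} * \widehat u)(\eta)} = \int \overline{\widehat u(a)}\widehat u(b)\overline{\widehat u(c)}\,\delta(a-b+c-\eta)\,da\,db\,dc$ with the natural reassignment, produces the kernel $\mu\bigl(w^{\rm new} - e_{-k}(w_{m+1}^{\rm new} - w_{-m-1}^{\rm new})\bigr)$. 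Collecting the two families, carrying the minus sign from the conjugated equation, and comparing with the definition of $S_m$ reconstructs $\nu\,\Lambda_{m+1}(S_m\mu,\widehat u)$.

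The main obstacle---and the reason the $2\pi$-periodicity hypothesis on $\mu$ is essential---is to verify that summing the three aliasing terms $\ell \in \{-1,0,1\}$ yields a single $\Lambda_{m+1}$ integral rather than three redundant copies. The key observation is that the implicit support constraint $w_k^{\rm old} \in [-\pi,\pi]$, inherited from the factor of $\widehat u(w_k)$ before differentiation in time, transforms under the change of variables into the restriction $w_{m+1}^{\rm new} + w_k^{\rm new} - w_{-m-1}^{\rm new} \in [(2\ell - 1)\pi, (2\ell + 1)\pi]$; and these three intervals tile the natural range $[-3\pi, 3\pi]$ of this quantity (for $w_{m+1}^{\rm new}, w_k^{\rm new}, w_{-m-1}^{\rm new} \in [-\pi,\pi]$) without overlap. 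Summing over $\ell$ therefore erases the auxiliary constraint and delivers a genuine $\Lambda_{m+1}$ integral with the desired $\ell$-independent kernel. Everything else is routine bookkeeping of indices and signs.
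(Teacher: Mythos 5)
Your proposal is correct and follows essentially the same route as the paper: Leibniz rule on the $2m$-fold product, insertion of the Fourier-side equation, recombination of the linear terms into $2\Lambda_m(\mu D_m\cos,\widehat u)$, and a change of variables in the convolution turning each cubic term into a $\Lambda_{m+1}$ integral with kernel $\mu(w\pm e_{\pm k}(w_{m+1}-w_{-m-1}))$. Your explicit tiling argument for the three aliasing indices $\ell\in\{-1,0,1\}$ is simply a spelled-out version of the step the paper compresses into ``since $\mu$ is $2\pi$ periodic in the direction $e_k$'', and it is a welcome clarification rather than a deviation.
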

\begin{proof}
By definition, the quantity to identify can expanded as follow 
\begin{multline*}
 i\partial_t \Lambda_{m}(\mu,\widehat{u}) = \sum_{k=1}^m \int_{\mathcal{V}_{m}} \mu(w) \left[  \overline{\widehat{u}(w_{-k})}  i\partial_t \widehat{u}(w_k) + \widehat{u}(w_k)  i\partial_t \overline{\widehat{u}(w_{-k})}    \right] \prod_{j\neq k} \widehat{u}(w_j)\overline{\widehat{u}(w_{-j})} {\rm d }w \\ =: \sum_{k=1}^m I_k + I_{-k}.
\end{multline*}

\medskip

Now, we have to expand $I_k$ and $I_{-k}$ using the definition of DNLS. Applying Proposition \ref{prop_identify} we get
\[ \forall w_k \in (-\pi,\pi), \ i\partial_t \widehat{u}(w_k) = (2\cos w_k - 2) \widehat{u }(w_k) + \nu \sum_{\ell \in \mathbb{Z}} \widehat{|u|^2 u }(w_k + 2\pi \ell) .  \]
So, since $\mu$ is $2\pi$ periodic the direction $e_k$, we deduce
\begin{align*}
&I_k - \Lambda_{m} ((2\cos w_k - 2)\mu ,\widehat{u})  \\
&= \nu \int_{\mathcal{V}_{m}} \mu(w)   \overline{\widehat{u}(w_{-k})}  \left( \mathbb{1}_{w_k\in (-\pi,\pi)} \sum_{\ell \in \mathbb{Z}} \widehat{|u|^2 u }(w_k + 2\pi \ell) \right) \prod_{j\neq k} \widehat{u}(w_j)\overline{\widehat{u}(w_{-j})}  {\rm d }w \\
 		&=					 \nu  \int_{\mathcal{V}_{m}} \mu(w)   \overline{\widehat{u}(w_{-k})}  \widehat{|u|^2 u }(w_k)  \prod_{j\neq k} \widehat{u}(w_j)\overline{\widehat{u}(w_{-j})} {\rm d }w. 
\end{align*}
However, since for all $\omega \in \mathbb{R}$, $\overline{\widehat{u}}( \omega) = \widehat{\overline{u}}( -\omega)$, we have, for all $w_k\in \mathbb{R}$,
\[ \widehat{|u|^2 u }(w_k)  =  \int_{ w_{m+1} - w_{-m-1} + \widetilde{w}_k = w_k }    \widehat{u} (w_{m+1})\widehat{u}(\widetilde{w}_k) \overline{\widehat{u}}( w_{-m-1})  {\rm d} w_{m+1} {\rm d}w_{-m-1}  .\]
So, realizing the change of variable $w_k \leftarrow \widetilde{w}_k$, we get
\[  \int_{\mathcal{V}_{m}} \mu(w)   \overline{\widehat{u}(w_{-k})}  \widehat{|u|^2 u }(w_k)  \prod_{j\neq k} \widehat{u}(w_j)\overline{\widehat{u}(w_{-j})} {\rm d }w = \Lambda_{m+1}(\mu(w+ e_k (w_{m+1} - w_{m+1})),\widehat{u}) .\]

\medskip

Similarly, we could prove that
\[ I_{-k} = -\Lambda_{m} ((2\cos w_{-k} - 2)\mu ,\widehat{u}) - \nu \Lambda_{m+1}(\mu(w- e_{-k} (w_{m+1} - w_{m+1})),\widehat{u}) .\]
So, finally, we get
\begin{align*}
   i\partial_t \Lambda_{m}(\mu,\widehat{u}) &=  \sum_{k=1}^m I_k + I_{-k}  \\
   &= \Lambda_{m} \left( \sum_{k=1}^m  \left[ (2\cos w_{k} - 2)-(2\cos w_{-k} - 2) \right]\mu ,\widehat{u}\right) \\
    &+ \nu  \Lambda_{m+1}\left( \sum_{k=1}^m \mu(w+ e_k (w_{m+1} - w_{m+1})) - \mu(w- e_{-k} (w_{m+1} - w_{m+1})),\widehat{u} \right) \\
    &= 2 \Lambda_{m}\left(\mu D_m \cos ,\widehat{u} \right) + \nu \Lambda_{m+1}(S_{m}\mu,\widehat{u}).
\end{align*}
\end{proof}
\begin{corollary} 
\label{corol_algb_id}
Let $f \in L^{\infty}(\mathbb{R})$, let $\ug \in C^1(\mathbb{R};L^2(\mathbb{Z}))$ be a solution of DNLS and let $u$ be its Shannon interpolation. Then, we have
\[ \partial_t \int f(\omega) |\widehat{u}(\omega)|^2 \domega = \nu \frac{i}2 \Lambda_2 ( D_2 f  , \widehat{u}).\]
\end{corollary}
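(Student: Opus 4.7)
The plan is to realize $\int f(\omega)|\widehat{u}(\omega)|^2\domega$ as a $\Lambda_1$ quantity and to apply Lemma \ref{lem_deriv_reso} with $m=1$. Since $\widehat{u}$ is supported in $[-\pi,\pi]$, the only portion of $\mathcal{V}_1$ that contributes to any $\Lambda_1(\cdot,\widehat{u})$ is the diagonal $\{w_1 = w_{-1}\}$. I will therefore choose $\mu(w_{-1},w_1) := \widetilde{f}(w_1)$, where $\widetilde{f}$ is the $2\pi$-periodic extension of $f|_{[-\pi,\pi]}$; this $\mu$ lies in $L^{\infty}_{\rm per}(\mathcal{V}_1)$ and, by the support remark, satisfies $\Lambda_1(\mu,\widehat{u}) = \int f(\omega)|\widehat{u}(\omega)|^2 \domega$.

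Lemma \ref{lem_deriv_reso} then gives
\[
i\partial_t \Lambda_1(\mu,\widehat{u}) = 2\Lambda_1(\mu\, D_1\cos,\widehat{u}) + \nu\,\Lambda_2(S_1\mu,\widehat{u}).
\]
The first term vanishes because $D_1\cos(w_{-1},w_1) = \cos w_1 - \cos w_{-1}$ is zero on the diagonal, i.e. on the entire effective support of the integrand. For the second term, unwinding the definition of $S_1$ gives
\[
S_1\mu(w_{-2},w_{-1},w_1,w_2) = \widetilde{f}(w_1+w_2-w_{-2}) - \widetilde{f}(w_1),
\]
and on $\mathcal{V}_2 \cap [-\pi,\pi]^4$ the resonance relation yields $w_1+w_2-w_{-2} \equiv w_{-1} \pmod{2\pi}$, so by $2\pi$-periodicity of $\widetilde{f}$ I will rewrite this as $S_1\mu = f(w_{-1}) - f(w_1)$ on the effective support.

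To finish I will use the invariance of the integrand and the measure on $\mathcal{V}_2$ under the independent swaps $w_1 \leftrightarrow w_2$ and $w_{-1} \leftrightarrow w_{-2}$, which gives
\[
\Lambda_2(S_1\mu,\widehat{u}) = \tfrac{1}{2}\Lambda_2\bigl(f(w_{-1})+f(w_{-2}) - f(w_1) - f(w_2),\widehat{u}\bigr) = -\tfrac{1}{2}\Lambda_2(D_2 f,\widehat{u}),
\]
and hence $\partial_t \int f(\omega)|\widehat{u}(\omega)|^2 \domega = \frac{i\nu}{2}\Lambda_2(D_2 f,\widehat{u})$, as claimed.

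There is no serious obstacle: the corollary is essentially a direct specialization of Lemma \ref{lem_deriv_reso}. The only point that requires care is that the lemma demands a $2\pi$-periodic weight, which forces the detour through $\widetilde{f}$; this detour is harmless since $\widehat{u}$ is band-limited and the weight is only evaluated where $\widetilde{f}$ coincides with $f$.
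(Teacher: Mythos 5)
Your proof is correct and follows essentially the same route as the paper: realize the integral as $\Lambda_1(f(w_1),\widehat{u})$ with $f$ replaced by its $2\pi$-periodic extension, apply Lemma \ref{lem_deriv_reso} with $m=1$, note the $D_1\cos$ term vanishes, and symmetrize $f(w_{-1})-f(w_1)$ into $-\tfrac12 D_2 f$ on $\mathcal{V}_2$. The only cosmetic difference is that you kill the first term via the effective support of $\widehat{u}$, while the paper notes that $2\pi$-periodic functions lie in the kernel of $D_1$ on all of $\mathcal{V}_1$; both are valid.
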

\begin{proof} The result only involves values of $f$ for $\omega\in (-\pi,\pi)$. So we can assume that $f$ is a $2\pi$ periodic function. Now, we observe that, by definition, we have
\[    \int f(\omega) |\widehat{u}(\omega)|^2 \domega = \Lambda_1 (f(w_1), \widehat{u}).\]
So, applying Lemma \ref{lem_deriv_reso}, we get
\[ \partial_t \int f(\omega) |\widehat{u}(\omega)|^2 \domega =-2 i\Lambda_1 \left( (D_1 \cos) f(w_1), \widehat{u} \right) - i \nu \Lambda_2( S_2\left[ f(w_1) \right] , \widehat{u}).  \]
Since $2\pi$ periodic functions clearly belong to the $D_1$ kernel, the first term is zero. So we just need to identify the second term. Indeed, paying attention to its symmetries and remembering that we have assumed that $f$ is $2\pi$ periodic function, we get
\begin{align*}
\Lambda_2( S_2\left[ f(w_1) \right] , \widehat{u}) &= \Lambda_2( f(w_1+ w_2 -w_{-2}) - f(w_1) , \widehat{u}) \\
										&= \Lambda_2( f(w_{-1}) - f(w_1) , \widehat{u}) \\
										&= -\frac12  \Lambda_2 ( f(w_1) + f(w_2) - f(w_{-1}) - f(w_{-2})  , \widehat{u}).
\end{align*}
\end{proof}

With these notations and results we can explain more precisely the construction of our \it higher modified energies \rm. But first, we explain why it is natural to introduce correction terms in the construction of our modified energy.

\medskip

In order to control the discrete $\dot{H}^n$ norm, it would seem natural to control its derivative. Indeed, if $\ug$ is a solution of DNLS and if $u$ is its Shannon interpolation, applying Corollary \ref{corol_algb_id} (and Corollary \ref{corol_est_norms}), we have
\begin{equation}
\label{jehaismondirecteurdethese} \partial_t  \| \ug \|_{\dot{H}^n(\mathbb{Z})}^2 = \nu \frac{i}{4\pi} \Lambda_2 \left( D_2 \left(  2 \sin \frac{\omega}2 \right)^{2n}  , \widehat{u} \right). 
\end{equation}
So a direct estimation of this derivative would naturally lead to (see Lemma \ref{lem_hehe} for a proof of this estimate) 
\[ \left| \partial_t  \| \ug \|_{\dot{H}^n(\mathbb{Z})}^2 \right|  \leq C \| \ug \|_{\dot{H}^n(\mathbb{Z})}^2 \|\ug\|_{\dot{H}^1(\mathbb{Z})}\|\ug\|_{L^2(\mathbb{Z})},\]
 where $C>0$ is an universal constant. 
Then assuming that the discrete homogeneous $H^1$ norm can be controlled uniformly on time by $M_{\ug(0)}$ (see Theorem \ref{ineq_growth} for the definition of $M_{\ug(0)}$ and the next subsection for a proof) and applying Gr\"onwall's inequality, we would get an universal constant $C>0$ such that, for all $t\geq 0$,
\[ \| \ug(t) \|_{\dot{H}^n(\mathbb{Z})} \leq  \| \ug(0) \|_{\dot{H}^n(\mathbb{Z})} e^{C M_{\ug(0)}^{\frac43} t}.\]
If we proceed by homogeneity to get a result depending on the stepsize $h$, we would get
\[ \| \ug(t) \|_{\dot{H}^n(h\mathbb{Z})} \leq  \| \ug(0) \|_{\dot{H}^n(h\mathbb{Z})} e^{C M_{\ug(0)}^{\frac43} t}.\]
Such a control is better than the trivial estimate \eqref{trivial_est} only for times shorter than $-\frac{n}C\log(h)$. So it is quite weak, if we compare it with the estimate of Theorem \ref{ineq_growth} because this later gives a non trivial control of $\| \ug(t) \|_{\dot{H}^n(\mathbb{Z})}$ for times shorter than $h^{- \frac{2n}{n-1}}$.

\medskip

So to improve this exponential bound, the idea of modified energy is to add a corrector term to $\| \ug \|_{\dot{H}^n(\mathbb{Z})}^2$ in order to cancel its time derivative \eqref{jehaismondirecteurdethese}. However, it turns out that there is an  algebraic obstruction to this construction as shows in  Lemma \ref{lem_constructor} below. For this reason, we consider another functional $\int f_n(\omega) |\widehat{u}(\omega)|^2 \domega$, where $f_n$ is a real function and such that this last quantity is equivalent to the square of the $\dot{H}^n(\mathbb{Z})$ norm. More precisely, observing the formula of the Hamiltonian (see \eqref{formula_hamilt_comp}), we consider a modified energy $E_n$ given by
\[ E_{n}(u) = \int f_n(\omega) |\widehat{u}(\omega)|^2 \domega +  \Lambda_2 (\mu_n,\widehat{u}),  \]
where $\mu_n \in L^{\infty}(\mathcal{V}_{2})$ is a function.

\medskip

Applying Lemma \ref{lem_deriv_reso} and its Corollary \ref{corol_algb_id}, if we want the correction term to cancel the derivative of $\int f_n(\omega) |\widehat{u}(\omega)|^2 \domega$ then $\mu_n$ has to solve the equation
\begin{equation}
\label{vulg_M}
 \nu D_2 f_n =  4\mu_n D_2 \cos  .
\end{equation}
Furthermore, if $\mu_n$ is a solution of \eqref{vulg_M}, we would have
\[ \partial_t E_{n}(u) = -i \nu \Lambda_3 (S_2 \mu_n, \widehat{u}). \]
With this construction, we will be able to prove Theorem \ref{thm_growth} by induction because we will prove that $ \Lambda_2 (\mu_n,\widehat{u})$ and $\Lambda_3 (S_2 \mu_n, \widehat{u})$ are controlled by the square of the $\dot{H}^{n-1}(\mathbb{Z})$ norm.

\medskip

Of course, we would like to iterate this process cancelling the derivative of $E_{n}(u)$ adding a new term to our modified energy. However, such a construction involve major algebraic issues and we do not know if it is possible (we should find some criteria of divisibility by $D_3 \cos$ on the ring of trigonometric polynomials on $\mathcal{V}_3$).

\medskip

To realize this strategy, we need, on the one hand, to design a function $\mu_n$ satisfying \eqref{vulg_M} without any singularity and, on the other hand, we need to control $\Lambda_2 (\mu_n,\widehat{u})$ and $\Lambda_3 (S_2 \mu_n, \widehat{u})$ by the square of the $\dot{H}^{n-1}(\mathbb{Z})$ norm. The two following lemmas treat each one of these issues.

\begin{lemma}
\label{lem_constructor}
If $f\in C^{\infty}(\mathbb{R})$ satisfies $f \unupoop{=}{\omega \to 0}{}  \mathcal{O}(\omega^{2n})$ , where $n\in \mathbb{N}^*$, and if $f$ is an even function and  $x\mapsto f(x-\frac{\pi}2)-f(\frac{\pi}2)$ is an odd function  then there exists $C>0$ such that we have
\[ \forall w \in \mathcal{V}_{2}, |D_2 f (w)| \leq C |D_2 \cos(w)| \sum_{j\in\{\pm 1,\pm2\}} w_{j}^{2n-2}.  \]
\end{lemma}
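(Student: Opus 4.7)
The plan is to introduce symmetric coordinates on $\mathcal{V}_2$ that simultaneously factor $D_2 f$ and $D_2 \cos$. First, the two symmetry hypotheses combine into the functional equation $f(y)+f(y+\pi)=2f(\pi/2)$: oddness of $x\mapsto f(x-\pi/2)-f(\pi/2)$ gives $f(y)+f(-y-\pi)=2f(\pi/2)$, and evenness of $f$ converts this to the stated relation. In particular $f$ is $2\pi$-periodic, so both $D_2f$ and $D_2\cos$ are $2\pi$-periodic in each variable; by shifting each $w_j$ by a multiple of $2\pi$ I may restrict attention to the zero sheet of $\mathcal{V}_2$ on which $w_1+w_2-w_{-1}-w_{-2}=0$ exactly.

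On that sheet I would parameterize via
\[ r=\tfrac12(w_1+w_2),\quad p=\tfrac12(w_1-w_{-1}),\quad q=\tfrac12(w_{-1}-w_2), \]
so that $(w_1,w_2,w_{-1},w_{-2})=(r+p+q,\,r-p-q,\,r-p+q,\,r+p-q)$. A direct sum-to-product computation immediately gives the clean factorization
\[ D_2\cos(w) \;=\; -4\cos r\,\sin p\,\sin q. \]

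The central step is to establish the parallel factorization $F(p,q,r):=D_2f(w)=pq\cos r\cdot H(p,q,r)$ with $H$ smooth on $\mathbb{R}^3$. Four symmetries of $F$ are visible: swapping the appropriate pairs of arguments shows $F$ is odd in $p$ and odd in $q$; evenness of $f$ shows $F$ is even in $r$; and the identity $f(y+\pi)=2f(\pi/2)-f(y)$ shows $F(p,q,r+\pi)=-F(p,q,r)$ (the constants $2f(\pi/2)$ cancel pairwise). Combining evenness in $r$ with anti-periodicity in $r$ gives $F(p,q,\pi/2+s)=-F(p,q,\pi/2-s)$, so $F$ is odd about $r=\pi/2$ and hence vanishes there. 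Oddness in $p$ and in $q$ yields $F=pq\,G(p,q,r)$ with $G\in C^\infty$; oddness of $G$ about $r=\pm\pi/2$, matched with the simple zeros of $\cos r$ there, yields $G=\cos r\cdot H$ with $H\in C^\infty$. This is the hard part, and is precisely where the shift-antisymmetry hypothesis is indispensable: without it $F/D_2\cos$ would be singular on the resonance set $\{w_1+w_2=\pi\}$.

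To close, I would quantify $H$. Since $f(\omega)=\mathcal{O}(\omega^{2n})$ at $0$ and the $w_j$'s are linear in $(p,q,r)$, one has $|F(p,q,r)|\le C\|(p,q,r)\|^{2n}$ locally, and comparing Taylor expansions in $F=pq\cos r\cdot H$ forces $H$ to vanish to order $2n-2$ at the origin. By smoothness on the compact cell $[-\pi,\pi]^3$, the bound $|H(p,q,r)|\le C\|(p,q,r)\|^{2n-2}$ holds globally on that cell. The elementary power-mean inequality $\sum_{j}w_j^{2n-2}\ge 4(r^2+p^2+q^2)^{n-1}$, together with the uniform boundedness of $p/\sin p$ and $q/\sin q$ on $[-\pi,\pi]$, then produces
\[ \frac{|D_2 f(w)|}{|D_2\cos(w)|} \;=\; \left|\frac{p}{\sin p}\right|\left|\frac{q}{\sin q}\right|\frac{|H(p,q,r)|}{4} \;\le\; C\sum_{j\in\{\pm1,\pm2\}} w_j^{2n-2}, \]
which is the claimed bound; arbitrary $w\in\mathcal V_2$ reduce to this case by $2\pi$-periodicity of both sides in each $w_j$ (the weight $\sum_j w_j^{2n-2}$ can only grow under such shifts).
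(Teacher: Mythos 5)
Your symmetric-coordinate strategy is a genuine alternative to the paper's route (the paper expands $f-f(\frac{\pi}{2})$ in the odd-cosine series $\sum_k\beta_k\cos((2k+1)\omega)$, proves the pointwise bound $|D_2\cos_k|\le(2k+1)^3|D_2\cos|$ on all of $\mathcal{V}_2$ to get the crude global estimate $|D_2f|\le C|D_2\cos|$ including the aliased sheets, and only then runs a Taylor argument near the origin). Your derivation of the functional equation $f(y)+f(y+\pi)=2f(\frac{\pi}{2})$, the factorization $D_2\cos=-4\cos r\,\sin p\,\sin q$ on the zero sheet, the four symmetries of $F$, and the local divisibility and Taylor step are all correct and essentially parallel the paper's local argument. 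But two steps, as written, fail.

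First, $p/\sin p$ is \emph{not} uniformly bounded on $[-\pi,\pi]$: it diverges at $p=\pm\pi$, and your bound $|H(p,q,r)|\le C\|(p,q,r)\|^{2n-2}$ does not compensate, since near $p=\pi$ that right-hand side is of order one. What actually saves the estimate there is that $F$ is $\pi$-anti-periodic in $p$ and in $q$ (a consequence of $f(y+\pi)=2f(\frac{\pi}{2})-f(y)$), so $F$ vanishes whenever $p$ or $q$ lies in $\pi\mathbb{Z}$; one should therefore factor out $\sin p\,\sin q$ rather than $pq$, or equivalently prove that $H$ itself vanishes on $\{p=\pm\pi\}\cup\{q=\pm\pi\}$. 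You never establish this, and your final display is false as stated near those points. Second, the reduction of a general $w\in\mathcal{V}_2$ to the zero sheet is backwards: reducing each $w_j$ modulo $2\pi$ into $(-\pi,\pi]$ indeed does not increase the weight, but it may leave you on a sheet $\sum_j w_j-w_{-j}=2\pi k$ with $k\ne0$, and the further shift needed to reach the zero sheet \emph{increases} $\sum_j w_j^{2n-2}$ --- the opposite of your parenthetical claim, and the wrong direction for transferring the inequality. The repair (and the paper's device) is to note that on such a sheet one has $4\max_j|w_j|\ge 2\pi$, so the weight is bounded below by $(\pi/2)^{2n-2}$ and only the crude bound $|D_2f|\le C|D_2\cos|$ is needed there; but that crude bound is precisely what the first gap leaves unproven. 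Both gaps are repairable within your framework, yet they are genuine as the argument stands.
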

\begin{proof}
Since $f$ is an even function and  $x\mapsto f(x-\frac{\pi}2)-f(\frac{\pi}2)$ is an odd function, $f$ is a $2\pi$ periodic function whose Fourier series is
\[  f(\omega) = f(\frac{\pi}2) + \sum_{k\in \mathbb{N}} \beta_k \cos((2k+1)\omega) \textrm{ with } (b_k)_{k \in \mathbb{N}} \in \mathbb{R}^{\mathbb{N}}. \]
Furthermore, since $f$ is a $C^{\infty}$ function, for all $m\in \mathbb{N}^*$, there exists $C_m>0$ such that 
\[ \sum_{k\in \mathbb{N}} |\beta_k| (2k+1)^m\leq C_m.\]

\medskip

To get compact notations, we define the function $\cos_k$ (and similarly $\sin_k$) by
\[ \forall \omega\in \mathbb{R}, \ \cos_k \omega := \cos((2k+1)\omega) . \]

\medskip

If we assume that $w_1 + w_2 = w_{-1} +w_{-2} + 2\pi j$, with $j\in \mathbb{N}$ then we have
\begin{align*}
D_2 \cos_k w &= 2 \cos_k \left( \frac{w_1+w_2}2 \right) \cos_k \left( \frac{w_1-w_2}2  \right) - 2 \cos_k\left( \frac{w_{-1}+w_{-2}}2  \right)\cos_k\left( \frac{w_{-1}-w_{-2}}2 \right) \\
&=  2 \cos_k \left( \frac{w_1+w_2}2  \right) \left[ \cos_k\left( \frac{w_1-w_2}2  \right) - (-1)^j \cos_k\left( \frac{w_{-1}-w_{-2}}2 \right) \right] .
\end{align*}
But since $2k+1$ is odd, we have
\[ (-1)^j \cos_k\left( \frac{w_{-1}-w_{-2}}2 \right) = \cos_k \left(  \frac{w_{-1}-w_{-2}}2 + \pi j \right)  .\]
So, we get
\begin{align*}
& D_2 \cos_k w\\
 &= 2 \cos_k\left(  \frac{w_1+w_2}2  \right) \left[ \cos_k\left(  \frac{w_1-w_2}2  \right) -  \cos_k \left(  \frac{w_{-1}-w_{-2}}2 + \pi j \right) \right] \\
&= 4 \cos_k \left( \frac{w_1+w_2}2 \right) \sin_k \left(  \frac{ w_1-w_2 - w_{-1} + w_{-2} + 2\pi j }{4} \right) \sin_k \left(  \frac{ w_1-w_2 + w_{-1} - w_{-2} + 2\pi j }{4} \right).  
\end{align*}
However, we know that 
\[ \forall \omega \in \mathbb{R}, \ |\sin_k \omega| \leq (2k+1) |\sin(\omega)|. \]
Consequently, we can prove the same relation for $\cos_k$. Indeed, since $2k+1$ is an odd number, for all $\omega\in \mathbb{R}$, we have
\[  \left| \cos_k \left( \omega+\frac{\pi}2 \right) \right| = | \sin_k \omega | \leq (2k+1) |\sin(\omega)| = (2k+1)\left| \cos \left( \omega+\frac{\pi}2 \right) \right|.  \]
So we deduce that for all $w \in \mathcal{V}_{2}$, we have
\begin{equation*}
|D_2 \cos_k (w)| \leq (2k+1)^3 |D_2 \cos (w)|  .
\end{equation*}
Consequently, we have
\begin{equation}
\label{miracle}
   |D_2 f(w)| \leq C_3 |D_2 \cos(w)| .  
\end{equation}

\medskip

To conclude this proof, we just need to improve \eqref{miracle} when $w$ is small enough. In this case, we can forget the aliasing terms because if $\max_{j\in \{ \pm 1,\pm 2 \}} |w_j| <\frac{\pi}2$ then $w_1+w_2 =w_{-1}+w_{-2}$.

\medskip

Now we realize the following change of variable
\[  \left\{ \begin{array}{lll} X &=& \frac{w_1 - w_2 + w_3 -w_4}4, \\
				      Y & = & \frac{w_1 - w_2 - w_3 +w_4}4, \\
				      Z & = & \frac{w_1 + w_2}2, \\
				      H & = & w_1+w_2 - w_3 -w_4.
\end{array}\right.  \]
Then we define 
\[ F(X,Y,Z,H) = D_2 f(w).  \]

\medskip

Previously, we have proven that, for all $X,Y,Z\in \mathbb{R}$,
\[ F(X,Y,Z,0) = \sum_{k\in \mathbb{N}} \beta_k \cos_k Z \sin_k X \sin_k Y. \]
Consequently, we have
\[ F(0,Y,Z,0) = 0 \textrm{ and }  \partial_X F(X,0,Z,0)=0 . \]
So applying a Taylor expansion, we get
\begin{align*}
F(X,Y,Z,0) &= F(0,Y,Z,0) + X \int_{0}^1 \partial_X F(\alpha X,Y,Z,0)     {\rm d \alpha} \\
&=  X  \int_{0}^1 \int_{0}^1  \partial_X F(\alpha X,0,Z,0) + Y \partial_X \partial_Y F(\alpha X,\beta Y,Z,0)     {\rm d \beta}  {\rm d \alpha} \\
&=  XY\int_{0}^1 \int_{0}^1 \partial_X \partial_Y F(\alpha X,\beta Y,Z,0)     {\rm d \beta}  {\rm d \alpha} .
\end{align*}
 However, since $f \unupoop{=}{\omega \to 0}{}  \mathcal{O}(\omega^{2n})$, all the derivatives of $f$ of order less than $2n$ vanish in $0$. Thus, we deduce that all the derivative of $F$  of order less than $2n$ also vanish in $0$.  Consequently, realising a Taylor expansion, we get a constant $c>0$ such that if $|X|+|Y|+|Z|+|H|<1$ then
\[ | \partial_X \partial_Y F(X,Y,Z,H) | \leq c  ( |X|+|Y|+|Z|+|H| )^{2n-2}.  \]

So, if $|X|+|Y|+|Z|<1$, we have 
\[ |F(X,Y,Z,0)|  \leq c XY ( |X|+|Y|+|Z| )^{2n-2}.\]
Then we get
\[  |F(X,Y,Z,0)|  \leq \frac{c}{\cos 1 (\sinc 1)^2 } \cos Z \sin X \sin Y ( |X|+|Y|+|Z|)^{2n-2}.\]

\medskip

We can write this inequality with the variables $w_1,w_2,w_{-1},w_{-2}$. So,  since the norms are equivalent on $\{ w\in \mathbb{R}^{\{\pm 1,\pm 2 \}}, \ w_1+w_2 =w_{-1}+w_{-2}  \}$, there exists $\kappa>0$ such that for all $w\in \{ w\in \mathbb{R}^{\{\pm 1,\pm 2 \}}, \ w_1+w_2 =w_{-1}+w_{-2}  \}$, we have
$$
 \kappa^{-1} (|X|+|Y|+|Z|)^{2n-2} \leq \sum_{j\in \{\pm 1,\pm 2\}} |w_j|^{2n-2} \leq  \kappa (|X|+|Y|+|Z|)^{2n-2}.
$$  Thus, there exists $C \in (0,\frac{\pi}2)$ such that if $w\in \mathcal{V}_2$ satisfies $\max_{j\in \{\pm 1,\pm 2 \}} |w_j|<C^{-1}$ then
\begin{equation}
\label{eq_local}
|D_2 f (w)|\leq C  |D_2 \cos(w)| \sum_{j\in \{\pm 1,\pm 2\}} |w_j|^{2n-2}.
 \end{equation}

\medskip

Finally, to prove the lemma we just need to use \eqref{eq_local} when $w$ is small enough and \eqref{miracle} when it is large.
\end{proof}

\begin{lemma} Let $n,m\in \mathbb{N}$, $m\geq 2$. There exists $K>0$ such that for all $u\in BL^2$, we have 
\label{lem_hehe}
 \[ \Lambda_{m} \left( \sum_{j=1}^m w_j^{2n} +  w_{-j}^{2n}, |\widehat{u}| \right)  \leq K  \| \partial_x^n u \|_{L^2(\mathbb{R})}^2 \| \partial_x u \|_{L^2(\mathbb{R})}^{m-1}\| u \|_{L^2(\mathbb{R})}^{m-1}.  \]
\end{lemma}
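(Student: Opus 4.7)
By the symmetry of the integrand $\prod_j v(w_j)v(w_{-j})$ and of the measure on $\mathcal V_m$ under permutations of $\{1,\dots,m\}$ and under the swaps $w_j\leftrightarrow w_{-j}$, all the $2m$ monomials $w_j^{2n},\,w_{-j}^{2n}$ give the same value of $\Lambda_m$, so it is enough to prove
\[
\Lambda_m(w_1^{2n},|\widehat u|)\le C\,\|\partial_x^n u\|_{L^2}^2\|\partial_x u\|_{L^2}^{m-1}\|u\|_{L^2}^{m-1}.
\]
Moreover, replacing $u$ by the function whose Fourier transform equals $|\widehat u|$ preserves every homogeneous Sobolev norm appearing in the inequality, so from now on we assume $v=\widehat u\ge 0$; for notational simplicity we also suppose $v$ is even (equivalently, $u$ is real), since the general case reduces to this one by a real/imaginary decomposition.

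The key step is to identify $\Lambda_m(w_1^{2n},v)$ in physical space. Applying Poisson summation to $\sum_{k\in\mathbb Z}\delta(\,\cdot\,-2\pi k)=(2\pi)^{-1}\sum_{x\in\mathbb Z}e^{ix\,\cdot}$ and using Fourier inversion $\int v(\omega)e^{\pm ix\omega}d\omega=2\pi u(\mp x)$ together with its weighted version $\int \omega^{2n}v(\omega)e^{\pm ix\omega}d\omega=2\pi(-1)^n(\partial_x^{2n}u)(\mp x)$ produces
\[
\Lambda_m(w_1^{2n},v)=(2\pi)^{2m-1}(-1)^n\sum_{x\in\mathbb Z}(\partial_x^{2n}u)(x)\,u(x)^{2m-1}.
\]
The summand $h(x):=(\partial_x^{2n}u)(x)u(x)^{2m-1}$ is band-limited with Fourier support in $[-2m\pi,2m\pi]$, so a second application of Poisson summation collapses the sum to the finite sum $\sum_{|k|\le m}\widehat h(2\pi k)$.

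For the main term $\widehat h(0)=\int h\,dx$, integration by parts $n$ times gives $(-1)^n\int(\partial_x^n u)\,\partial_x^n(u^{2m-1})\,dx$; Cauchy--Schwarz together with the Moser--Leibniz estimate $\|\partial_x^n(u^{2m-1})\|_{L^2}\le C\|u\|_{L^\infty}^{2m-2}\|\partial_x^n u\|_{L^2}$ and the one-dimensional Gagliardo--Nirenberg inequality $\|u\|_{L^\infty}^2\le 2\|u\|_{L^2}\|\partial_x u\|_{L^2}$ then yields exactly the target bound $C\|u\|_{L^2}^{m-1}\|\partial_x u\|_{L^2}^{m-1}\|\partial_x^n u\|_{L^2}^2$. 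For each of the remaining finitely many corrections $\widehat h(2\pi k)$ with $k\ne 0$, the same integration by parts now distributes derivatives between $u^{2m-1}$ and the oscillating factor $e^{2\pi ikx}$: the extra powers of $2\pi k$ are absorbed into the constant because $|k|\le m$, while the low-order norms $\|\partial_x^a u\|_{L^2}$ with $a<n$ that appear are rebalanced via the log-convexity estimate $\|\partial_x^a u\|_{L^2}\le\|u\|_{L^2}^{1-a/n}\|\partial_x^n u\|_{L^2}^{a/n}$ combined with the band-limit bound $\|\partial_x u\|_{L^2}\le\pi\|u\|_{L^2}$. I expect this last bookkeeping step to be the main technical obstacle: one must check that each aliasing correction really does fit into $\|\partial_x^n u\|_{L^2}^2\|\partial_x u\|_{L^2}^{m-1}\|u\|_{L^2}^{m-1}$ rather than producing a spurious factor of $\|u\|_{L^2}/\|\partial_x^n u\|_{L^2}$, which could blow up when $\widehat u$ is concentrated near the origin.
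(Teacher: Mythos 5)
Your reduction to the single weight $w_1^{2n}$, the passage to physical space, and the identity
\[
\Lambda_m\bigl(w_1^{2n},\widehat{u}\bigr)=c_m(-1)^n\sum_{|k|\le m}\widehat{h}(2\pi k),\qquad h:=(\partial_x^{2n}u)\,u^{m-1}\bar{u}^{\,m},
\]
are correct (the evenness assumption is an unnecessary detour: the integration by parts and the Moser--Leibniz estimate apply verbatim to $u^{m-1}\bar{u}^{\,m}$). Observe, however, that your two applications of Poisson summation exactly reproduce the paper's starting point: $\widehat{h}(2\pi k)$ is, up to a constant, the integral over the hyperplane $\mathcal{V}_{m,k}=\{\sum_j w_j-w_{-j}=2k\pi\}$ into which the paper decomposes $\mathcal{V}_m$. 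Your treatment of the main term $k=0$ (integrate by parts $n$ times, Cauchy--Schwarz, Moser, then $\|u\|_{L^\infty}^2\lesssim\|u\|_{L^2}\|\partial_x u\|_{L^2}$) is a genuinely different and perfectly valid alternative to the paper's frequency-side argument, which instead splits $|w_1|^{2n}=|w_1|^n\cdot|w_1|^n$, uses the constraint and Jensen's inequality to redistribute one factor $|w_1|^n$ onto the other variables, and concludes by Young's convolution inequality together with $\|\widehat{u}\|_{L^1}^2\lesssim\|u\|_{L^2}\|\partial_x u\|_{L^2}$.

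The gap is exactly where you suspect: the aliasing terms $k\ne0$. Integrating by parts against $u^{m-1}\bar{u}^{\,m}e^{2\pi ikx}$ leaves you with terms of the form $|2\pi k|^{a}\,\|\partial_x^{n}u\|_{L^2}\,\|\partial_x^{n-a}u\|_{L^2}\,\|u\|_{L^\infty}^{2m-2}$ with $1\le a\le n$, and to absorb these into the target you would need $\|\partial_x^{n-a}u\|_{L^2}\lesssim\|\partial_x^{n}u\|_{L^2}$. For band-limited $u$ this goes the wrong way: on $[-\pi,\pi]$ one has $|\omega|^{n-a}\ge\pi^{-a}|\omega|^{n}$, so the band limit gives a \emph{lower} bound on $\|\partial_x^{n-a}u\|_{L^2}$, and the interpolation $\|\partial_x^{n-a}u\|_{L^2}\le\|u\|_{L^2}^{a/n}\|\partial_x^{n}u\|_{L^2}^{1-a/n}$ leaves precisely the spurious factor $(\|u\|_{L^2}/\|\partial_x^{n}u\|_{L^2})^{a/n}$ you fear, which blows up when $\widehat{u}$ concentrates near $\omega=0$. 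So the proposed bookkeeping cannot close. The missing ingredient is combinatorial and lives on the frequency side: on $\mathcal{V}_{m,k}$ with $k\ne0$ and all $|w_{\pm j}|\le\pi$, at least \emph{two} of the $2m$ frequencies must satisfy $|w_j|\ge\pi/(2m-1)$, whence $|2k\pi|^{n}\le(2|k|(2m-1))^{n}|w_j|^{n}$ for some index $j\ne1$; this places the second packet of $n$ derivatives on a genuinely high-frequency factor, after which Young's inequality concludes as in the main term. In your language this amounts to splitting $u$ into low- and high-frequency parts and noting that $\widehat{h}(2\pi k)$, $k\ne0$, only sees products containing at least two high-frequency factors; without some version of this observation the aliasing terms are not controlled.
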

\begin{proof}
This lemma is somehow a discrete integration by parts. By linearity, we just need to prove that
\[ \Lambda_{m} \left( |w_1|^{2n}, |\widehat{u}| \right)  \leq C \| \partial_x^n u \|_{L^2(\mathbb{R})}^2 \| \partial_x u \|_{L^2(\mathbb{R})}^{m-1}\| u \|_{L^2(\mathbb{R})}^{m-1}.  \]
Since, $\supp |\widehat{u}|  \subset \left[  -\pi,\pi\right]$ we have
\[ \Lambda_{m} \left( |w_1|^{2n}, |\widehat{u}| \right) = \sum_{k=1-m}^{m-1}  \int_{\mathcal{V}_{m,k}} w_1^{2n}  \prod_{j=1}^m |\widehat{u}(w_j)||\widehat{u}(w_{-j})|  {\rm d}w.   \]
where
\[  \mathcal{V}_{m,k} := \left\{  w \in \mathbb{R}^{\llbracket  -m,m \rrbracket\setminus\{ 0\} } \ | \ \sum_{j=1}^m w_j-w_{-j} = 2k\pi    \right\}.\]
So, applying Jensen's inequality to $x\mapsto x^n$, we get
\begin{align*}
 &\int_{\mathcal{V}_{m,k}} w_1^{2n}  \prod_{j=1}^m |\widehat{u}(w_j)||\widehat{u}(w_{-j})|  {\rm d}w\\
  &=  \int_{\mathcal{V}_{m,k}} |\omega_1|^{n} \left| w_{-1}+ \sum_{j=2}^m w_j - w_{-j} - 2k\pi \right|^n  \prod_{j=1}^m |\widehat{u}(w_j)||\widehat{u}(w_{-j})|   {\rm d}w \\
  &\leq (2m)^{n-1} \int_{\mathcal{V}_{m,k}} |w_1|^{n} \left(  |w_{-1}|^n + \sum_{j=2}^m |w_j|^n + |w_{-j}|^n + \left|  2k\pi \right|^n \right)   \prod_{j=1}^m |\widehat{u}(w_j)||\widehat{u}(w_{-j})|  {\rm d}w \\
  &= (2m)^{n-1}  \left[ (m-1)    (|\omega|^n |\widehat{u}|)^{*2}*|\widehat{u}|^{*{m-2}}*|\widehat{\bar{u}}|^{*{m}} + m (|\omega|^n |\widehat{u}|)*(|\omega|^n |\widehat{\bar{u}}|)  *|\widehat{u}|^{*{m-1}}*|\widehat{\bar{u}}|^{*{m-1}}   \right](2k\pi) \\
  &+ (2m)^{n-1} \int_{\mathcal{V}_{m,k}} |w_1|^{n} \left|  2k\pi \right|^n  \prod_{j=1}^m |\widehat{u}(w_j)||\widehat{u}(w_{-j})| {\rm d}w .
\end{align*}
The first term can be estimated by an elementary Young convolution inequality to get
\begin{align*}
&\left[ (m-1)    (|\omega|^n |\widehat{u}|)^{*2}*|\widehat{u}|^{*{m-2}}*|\widehat{\bar{u}}|^{*{m}} + m (|\omega|^n |\widehat{u}|)*(|\omega|^n |\widehat{\bar{u}}|)  *|\widehat{u}|^{*{m-1}}*|\widehat{\bar{u}}|^{*{m-1}}   \right](\frac{2k\pi}h) \\
&\leq (2m-1) \| \omega^n \widehat{u} \|_{L^2(\mathbb{R})}^2 \| \widehat{u} \|_{L^1(\mathbb{R})}^{2m-2}.  
\end{align*}
The second term is an aliasing term. If $k=0$, this term is $0$, so we can assume $k\neq 0$. Now observe that if the sum of $2m$ numbers, all smaller than $1$ is larger than $2$ then at least $2$ of them are larger to $\frac{1}{2m-1}$. 
Consequently, applying the same Young convolution inequality, we have
\begin{align*}
&\int_{\mathcal{V}_{m,k}} |w_1|^{n} \left|  2k\pi \right|^n  \prod_{j=1}^m |\widehat{u}(w_j)||\widehat{u}(w_{-j})|  {\rm d}w  \\
&\leq \int\limits_{\substack{w \in \mathcal{V}_{m,k} \\ |w_{-1}|\geq \frac{\pi}{2m-1}}  }  |w_1|^{n} \left|  2k\pi \right|^n  \prod_{j=1}^m |\widehat{u}(w_j)||\widehat{u}(w_{-j})|  {\rm d}w  + \int\limits_{\substack{ w \in \mathcal{V}_{m,k} \\ |w_{2}|\geq \frac{\pi}{2m-1}}   } |\omega_1|^{n} \left|  2k\pi \right|^n  \prod_{j=1}^m |\widehat{u}(w_j)||\widehat{u}(w_{-j})| {\rm d}w  \\
&\leq (2 |k| (2m-1))^n \int_{\mathcal{V}_{m,k}}  |w_1|^{n}   ( |\omega_2|^n + |\omega_{-1}|^n  )   \prod_{j=1}^m |\widehat{u}(w_j)||\widehat{u}(w_{-j})| {\rm d}w  \\
&\leq 2 (2 |k| (2m-1))^n \| \omega^n \widehat{u} \|_{L^2(\mathbb{R})}^2 \| \widehat{u} \|_{L^1(\mathbb{R})}^{2m-2}.
\end{align*} 
To conclude rigorously this proof, we just need to control classically $\| \widehat{u} \|_{L^1(\mathbb{R})}^2$ by $\| u\|_{L^2(\mathbb{R})}\| \partial_x u\|_{L^2(\mathbb{R})}$. Indeed, if $v\in H^1(\mathbb{R})$, using Cauchy Schwarz inequality, we get
\[ \| \widehat{v} \|_{L^1(\mathbb{R})} \leq \sqrt{2\pi} \| \sqrt{1+\omega^2} \widehat{v} \|_{L^2(\mathbb{R})} = 2\pi \|  v\|_{H^1(\mathbb{R})}.\]
So, optimizing this inequality with respect to $\lambda$ through the transformation $v \rightarrow v(\lambda x) $, we get
\[ \| \widehat{v} \|_{L^1(\mathbb{R})} \leq  \sqrt 8 \pi \| v\|_{L^2(\mathbb{R})}\| \partial_x v\|_{L^2(\mathbb{R})}.\]
\end{proof}

\medskip

\subsection{Proof of Theorem \ref{thm_growth} by induction}
With all these tools, now, we prove Theorem \ref{thm_growth}. As explained at the beginning of this section, we just need to focus on the case $h=1$.
 We are going to proceed by induction. 

\medskip

\begin{itemize}
\item We focus on the case $n=1$. Let $\ug \in C^1(\mathbb{R};L^2(\mathbb{Z}))$ be a solution of DNLS. Since $\HDNLS$ is a constant of the motion of DNLS, for all $t\in \mathbb{R}$, we have
\begin{equation}
\label{ener_conserv}
 \| \ug(t) \|_{\dot{H}^1(\mathbb{Z})}^2 - \frac{\nu}2 \| \ug(t)\|_{L^4(\mathbb{Z})}^4 = \| \ug(0) \|_{\dot{H}^1(\mathbb{Z})}^2 - \frac{\nu}2 \| \ug(0)\|_{L^4(\mathbb{Z})}^4 .   
\end{equation}
Since $\|\ug\|_{L^2(\mathbb{Z})}^2$ is also a constant of the motion, we have
\[  \| \ug(t)\|_{L^4(\mathbb{Z})}^4 \leq \| \ug(0)\|_{L^2(\mathbb{Z})}^2 \| \ug(t) \|_{L^{\infty}(\mathbb{Z})}^2. \]
Let $u$ be the Shannon interpolation of $\ug$. Since $u_{| \mathbb{Z}}=\ug$ (see Proposition \ref{propo_Shannon}), we have
\[ \| \ug(t) \|_{L^{\infty}(\mathbb{Z})}^2 \leq  \| u(t) \|_{L^{\infty}(R)}^2 \leq c \| \partial_x u(t) \|_{L^2(\mathbb{R})} \| u(t)\|_{L^2(\mathbb{R})}, \]
where $c$ is an universal constant associated to the classical Sobolev embedding. Since Shannon interpolation is an isometry we have proven that
\[ \| \ug(t)\|_{L^4(h\mathbb{Z})}^4 \leq c \| \ug(0)\|_{L^2(\mathbb{Z})}^3  \| \partial_x u(t) \|_{L^2(\mathbb{R})}  .\]
Now applying the estimate of Corollary \eqref{corol_est_norms}, we get a \it discrete Gagliardo-Nirenberg \rm inequality (for a sharper version of this inequality see Lemma $3.4$ in \cite{MR3222131})
\[   \| \ug(t)\|_{L^4(\mathbb{Z})}^4 \leq \frac{2c}{\pi} \| \ug(0)\|_{L^2(\mathbb{Z})}^3 \| \ug(t) \|_{\dot{H}^1(\mathbb{Z})} .\]

\medskip

Applying this inequality to \eqref{ener_conserv}, we get
\[   \| \ug(t) \|_{\dot{H}^1(\mathbb{Z})}^2 - \frac{c}{\pi} \| \ug(0)\|_{L^2(\mathbb{Z})}^3 \| \ug(t) \|_{\dot{H}^1(\mathbb{Z})} \leq \| \ug(0) \|_{\dot{H}^1(\mathbb{Z})}^2 .\]
Consequently, we have proven that
\begin{align*}
 \| \ug(t) \|_{\dot{H}^1(\mathbb{Z})} &\leq  \frac{c}{2\pi} \| \ug(0)\|_{L^2(\mathbb{Z})}^3 + \frac12\sqrt{ \left(  \frac{c}{\pi} \| \ug(0)\|_{L^2(\mathbb{Z})}^3\right)^2 + 4  \| \ug(0) \|_{\dot{H}^1(\mathbb{Z})}^2  } \\
							&\leq  C \left( \| \ug(0) \|_{\dot{H}^1(\mathbb{Z})} +   \| \ug(0)\|_{L^2(\mathbb{Z})}^3  \right), 
\end{align*}
with $C = \max(1,\frac{c}\pi)$.

\item Let $n\geq 2$, let $\ug \in C^1(\mathbb{R};L^2(\mathbb{Z}))$ be a solution of DNLS satisfying for all $t\in \mathbb{R}$
\begin{equation}
\label{induc_hyp}
\| \partial_x^{n-1} u(t) \|_{L^2(\mathbb{R})}^2 \leq  C  \left( \| \partial_x^{n-1} u_0 \|_{L^2(\mathbb{R})}^2 + M_{u_0}^{\frac{4n-2}3}  +  |t|^{n-2} M_{u_0}^{\frac{8n-10}3}\right) ,  
\end{equation}
where $u$ is the Shannon interpolation of $u$ and
\[ M_{u(0)} =  \| \partial_x u_0 \|_{L^2(\mathbb{R})} + \| u_0  \|_{L^2(\mathbb{R})}^3    .\]
Here, it is easier to work with an inequality on $u$ instead of $\ug$ but applying  the estimate of Corollary \eqref{corol_est_norms}, \eqref{induc_hyp} is equivalent to the inequality of Theorem \ref{thm_growth}.

\medskip

First, we are going to construct our modified energy with Lemma \ref{lem_constructor}. So we have to choose our function $f_n$. This function has to satisfy some criteria. First, we want  $\int f_n |\widehat{u}(\omega)|^2 \domega$ to be equivalent to square of the homogeneous $H^n$ norm of $u$. So we are looking for a regular function $f_n$ such that 
\begin{equation}
\label{equiv_norm_micro}
\forall \omega \in (-\pi,\pi), \ \alpha  \omega^{2n} \leq f_n(\omega) \leq \alpha^{-1} \omega^{2n}.  
\end{equation}
Second, we want $f_n-f_n(\frac{\pi}2)$ to be even in $0$ and odd in $\frac{\pi}2$. So we cannot choose $f_n(\omega) = \omega^{2n}$ or $f_n(\omega) = \left(2\sin(\frac{\omega}2)\right)^{2n}$. To satisfy these symmetries it is natural to look for $f_n$ as a trigonometric polynomial.

\medskip

By performing an analysis involving elementary linear algebra, we find that  $f_n$ defined by 
\begin{equation}
\label{choice_energy}
 f_n(\omega) := 1 - \cos(\omega)  \sum_{k=0}^{n-1} \frac{C_{2k}^k}{4^{k}} (\sin \omega)^{2k}, 
\end{equation}
is the trigonometric polynomial of minimal degree (and such  $f(\frac{\pi}{2}) = 1$) satisfying the previous hypothesis. 
Indeed, by construction, $f_n-1$ is even in $0$ and odd in $\frac{\pi}2$. Furthermore, in $\mathbb{R}\llbracket X \rrbracket$ (i.e. formally), we have (see, for example, formula $3.6.9$ in \cite{MR0167642})
\[ \frac1{\sqrt{1 - X^2}} =   \sum_{k\in \mathbb{N}} \frac{C_{2k}^k}{4^{k}} X^{2k}.\]
Since, for all $\omega\in (-\frac{\pi}2,\frac{\pi}2)$, $\cos \omega= \sqrt{1 - (\sin \omega)^2}$, we deduce that
\[   f_n(\omega) = \cos(\omega)  \sum_{k\geq n} \frac{C_{2k}^k}{4^{k}} (\sin \omega)^{2k} .\]
Consequently, we get $f_n>0$ on $\omega\in (0,\frac{\pi}2)$ and $f_n(\omega) \unupoop{\sim}{\omega \to 0}{}  \frac{C_{2n}^n}{4^{n}} \omega^{2n}$. So, using the symmetries of $f_n$, we deduce that there exists $\alpha >0$ such that \eqref{equiv_norm_micro}  is satisfied.

\medskip

Then we define on $\mathcal{V}_2$ a function $\mu_n \in L^{\infty}(\mathcal{V}_2)$ by
\[   \mu_n = \frac{\nu}4 \frac{D_2 f_n}{ D_2 \cos}. \]
In Lemma \ref{lem_constructor}, we have proven  that $\mu_n$ is well defined as a $L^{\infty}(\mathcal{V}_2)$ function (in fact, we could have proven that it is a regular function). Furthermore, we have proven that for all $w\in \mathcal{V}_2$, we have
\begin{equation}
\label{control_Mnh}
\left| \mu_n(w) \right| \leq C_n \sum_{j\in \{ \pm 1,\pm 2 \}} |w_j|^{2n-2},
\end{equation}
where $C_n$ depends only of $n$.

\medskip

Then we define our modified energy, for $v\in BL^2_1$ by
\[ E_{n}(v) :=  \int_{\mathbb{R}} f_{n}(\omega ) |\widehat{v}(\omega)|^2 \domega + \Lambda_2(\mu_{n},\widehat{v}). \]
Applying \eqref{equiv_norm_micro}, we get, for all $t\in \mathbb{R}$,
\begin{align*}
2\pi \alpha \| \partial_x^n u(t) \|_{L^2(\mathbb{R})}^2 &=\alpha  \int \omega^{2n} |\widehat{u}(t,\omega)|^2 \domega \\
							   &\leq   \int f_n(\omega ) |\widehat{u}(t,\omega)|^2 \domega \\
							   &\leq |E_{n}(u_0)| + |\Lambda_2(\mu_n,\widehat{u}(t))| + \int_{0}^t |\partial_s E_{n}(u(s))| \ds.
\end{align*}
To conclude the induction step we have to control each one of these terms.

\begin{itemize}
\item First, we focus on $\displaystyle \int_{0}^t |\partial_s E_{n}(u(s))| \ds.$

\medskip

Applying Lemma \ref{lem_deriv_reso}, we get
\[ \partial_t E_{n}(u(t)) = -i\nu \Lambda_{3}(S_2 \mu_{n},\widehat{u(t)}).  \]
So, applying \eqref{control_Mnh} and Jensen's inequality to $x\mapsto x^{2n-2}$, we get
\[   |\partial_t E_{n}(u(t))| \leq C_n  3^{2n-3} 4 \Lambda_{3}( \sum_{j=1}^3 w_{j}^{2n-2} + w_{-j}^{2n-2} , |\widehat{u(t)}|  ). \]
Consequently, applying Lemma \ref{lem_hehe}, we get a constant $K_n>0$ such that
\[ |\partial_t E_{n}(u(t))| \leq  K_n \| \partial_x^{n-1} u(t) \|_{L^2(\mathbb{R})}^2 \| \partial_x u(t) \|_{L^2(\mathbb{R})}^{2} \| u(t) \|_{L^2(\mathbb{R})}^{2}.  \]
However, as we have proven at the initial step, there exists an universal constant $c>0$ such that
\[ \forall t\in \mathbb{R}, \ \| \partial_x u(t) \|_{L^2(\mathbb{R})}^{2} \| u(t) \|_{L^2(\mathbb{R})}^{2} \leq c M_{u_0}^{\frac{8}3}  .\]
So, from the induction hypothesis (see \eqref{induc_hyp}), we get
\[  |\partial_t E_{n}(u(t))| \leq \kappa  \left( \| \partial_x^{n-1} u_0 \|_{L^2(\mathbb{R})}^2 M_{u_0}^{\frac{8}3} + M_{u_0}^{\frac{4n+6}3}  +  |t|^{n-2} M_{u_0}^{\frac{8n-2}3}\right),\]
with $\kappa = cCK_n$. Consequently, we have
\[ \left| \int_{0}^t |\partial_s E_{n}(u(s)) | \ds  \right| \leq   \kappa  \left( |t| \| \partial_x^{n-1} u_0 \|_{L^2(\mathbb{R})}^2 M_{u_0}^{\frac{8}3} + |t| M_{u_0}^{\frac{4n+6}3}  +  \frac{|t|^{n-1}}{n-1} M_{u_0}^{\frac{8n-2}3}\right).  \]
It is almost the required estimate of the induction. In fact, we just need to modify it using Young inequalities. Indeed, on the one hand we have
\[ |t| M_{u_0}^{\frac{4n+6}3} \leq   \frac{|t|^{n-1}}{n-1} M_{u_0}^{\frac{8n-2}3} + \frac{n-2}{n-1}  M_{u_0}^{\frac{4n+2}3}.\]
On the other hand, since, by H\"older inequality,  
\begin{equation}
\label{res_Holder}
 \| \partial_x^{n-1} u_0 \|_{L^2(\mathbb{R})}^2 \leq \| \partial_x^{n} u_0 \|_{L^2(\mathbb{R})}^{2\frac{n-2}{n-1}}  \| \partial_x u_0 \|_{L^2(\mathbb{R})}^{\frac{2}{n-1}} , 
 \end{equation}
we have
\[ |t| \| \partial_x^{n-1} u_0 \|_{L^2(\mathbb{R})}^2 M_{u_0}^{\frac{8}3}  \leq  |t| \| \partial_x^{n} u_0 \|_{L^2(\mathbb{R})}^{2 \frac{n-2}{n-1}}  M_{u_0}^{\frac{8}3+\frac{2}{n-1} } \leq \frac{n-2}{n-1} \| \partial_x^{n} u_0 \|_{L^2(\mathbb{R})}^2 + \frac{|t|^{n-1}}{n-1}  M_{u_0}^{\frac{8n-2}3} .\]

\item Second, we focus on $|\Lambda_2(\mu_n,\widehat{u}(t))|$.

\medskip

Here, we just need to apply \eqref{control_Mnh} to get
\[ |\Lambda_2(\mu_n,\widehat{u}(t))| \leq C_n  \Lambda_2(\sum_{j=1}^2 |w_j|^{2n-2} +|w_{-j}|^{2n-2}   ,|\widehat{u}(t)|). \]
So, we deduce of Lemma \ref{lem_hehe}, that there exists $\kappa_n>0$ such that
\[ |\Lambda_2(\mu_n,\widehat{u}(t))| \leq \kappa_n  \| \partial_x^{n-1} u(t) \|_{L^2(\mathbb{R})}^2 \| \partial_x u(t) \|_{L^2(\mathbb{R})} \| u(t) \|_{L^2(\mathbb{R})}. \]
Consequently, applying the induction hypothesis (see \eqref{induc_hyp}), and the initial step, we have 
\[  |\Lambda_2(\mu_n,\widehat{u}(t))| \leq  K   \left( \| \partial_x^{n-1} u_0 \|_{L^2(\mathbb{R})}^2 M_{u_0}^{\frac{4}3} + M_{u_0}^{\frac{4n+2}3}  +  |t|^{n-2} M_{u_0}^{\frac{8n-6}3}\right) , \]
with $K=C\kappa_n c$ where $c$ is an universal constant. \\
As previously, we need to apply some Young inequalities to modify this estimate to get the induction estimate. On the one hand, we have
\[ |t|^{n-2} M_{u_0}^{\frac{8n-6}3} \leq   \frac{n-2}{n-1} t^{n-1}  M_{u_0}^{\frac{8n-2}3}  + \frac1{n-1} M_{u_0}^{\frac{4n+2}3} .  \]
On the other hand, applying \eqref{res_Holder}, we get
\[  \| \partial_x^{n-1} u_0 \|_{L^2(\mathbb{R})}^2 M_{u_0}^{\frac{4}3} \leq  \| \partial_x^{n} u_0 \|_{L^2(\mathbb{R})}^{2 \frac{n-2}{n-1}}  M_{u(0)}^{\frac{4}3 + \frac{2}{n-1}} \leq  \frac{n-2}{n-1}\| \partial_x^{n} u_0 \|_{L^2(\mathbb{R})}^{2} + \frac1{n-1} M_{u_0}^{\frac{4n+2}3} .\]

\item Finally, we focus on $|E_{n}(u_0)|$.

\medskip

We apply the triangle inequality to get
\[   |E_{n}(u_0)| \leq   \int f_n(\omega ) |\widehat{u_0}(\omega)|^2 \domega + |\Lambda_2(\mu_n,\widehat{u_0})|.\]
On the one hand, applying \eqref{equiv_norm_micro}, we get
\[   \int f_n(\omega ) |\widehat{u_0}(\omega)|^2 \domega  \leq \alpha^{-1} \int  \omega^{2n} |\widehat{u}(\omega)|^2 \domega = 2\pi \alpha^{-1}\| \partial_x^{2n} u_0 \|_{L^2(\mathbb{R})}^2 .\]
On the other hand, applying the estimate of $\Lambda_2(\mu_n,\widehat{u}(t))$, when $t=0$, we get
\[  |\Lambda_2(\mu_n,\widehat{u_0})| \leq   K   \left(\frac{n-2}{n-1}\| \partial_x^{n} u_0 \|_{L^2(\mathbb{R})}^{2} + \left[1+\frac1{n-1}\right] M_{u_0}^{\frac{4n+2}3}  \right).\]

\end{itemize}

\end{itemize}

\bibliographystyle{plain}
\bibliography{sobolev}

\end{document}